%%%%%%%%%%%%%%%%%%%%%%%%%%%%%%%%%%%%%%%%
%                                      %
% Subcategories of Module Categories   %
% via Restricted Yoneda Embeddings     %
%                                      %
% Dylan Fillmore and Jonas T. Hartwig  %
%                                      %
% v1: July 16, 2025                    %
%                                      %
%%%%%%%%%%%%%%%%%%%%%%%%%%%%%%%%%%%%%%%%

\documentclass[11pt,oneside,reqno]{amsart}
\usepackage[utf8]{inputenc}
\usepackage{enumerate}

\numberwithin{equation}{section}
\theoremstyle{definition}
\newtheorem{Definition}{Definition}[section]
\newtheorem{Example}[Definition]{Example}
\newtheorem{Remark}[Definition]{Remark}

\theoremstyle{plain}
\newtheorem{Theorem}[Definition]{Theorem}
\newtheorem{Proposition}[Definition]{Proposition}
\newtheorem{Corollary}[Definition]{Corollary}
\newtheorem{Lemma}[Definition]{Lemma}

\newtheorem{Question}[Definition]{Question}

% Fonts
\usepackage{amssymb}   % Automatically loads amsfonts.
\usepackage{mathrsfs}  % \mathscr uses Ralph Smith’s Formal Script.
\usepackage{eucal}     % \mathcal uses Euler Script.

% Layout
\usepackage{geometry}

% Processing
\usepackage[colorlinks=true,allcolors=blue]{hyperref}

% Graphics
\usepackage{tikz}
\usetikzlibrary{arrows, matrix}
\usepackage{tikz-cd}

% Greek

\newcommand{\be}{\beta}

\newcommand{\Ga}{\Gamma}

\newcommand{\ep}{\varepsilon}
\renewcommand{\epsilon}{\varepsilon}
\newcommand{\la}{\lambda}
\newcommand{\si}{\sigma}

% Blackboard Bold

\newcommand{\Z}{\mathbb{Z}}

\newcommand{\C}{\mathbb{C}}

\newcommand{\I}{\mathbb{I}}

% Fraktur
\renewcommand{\frak}{\mathfrak}
\newcommand{\Fgl}{\mathfrak{gl}}
\newcommand{\Fsl}{\mathfrak{sl}}
\newcommand{\Fg}{\mathfrak{g}}
\newcommand{\Fh}{\mathfrak{h}}
\newcommand{\Fk}{\mathfrak{k}}
\newcommand{\Fm}{\mathfrak{m}}
\newcommand{\Fn}{\mathfrak{n}}

% Mathcal
\newcommand{\CA}{\mathcal{A}}

% Mathsf
\newcommand{\SMod}{\mathsf{Mod}}
\newcommand{\SVec}{\mathsf{Vec}}

% Math operators

\DeclareMathOperator{\ann}{ann}

\DeclareMathOperator*{\colim}{colim}

\DeclareMathOperator{\End}{End}
\DeclareMathOperator{\Ext}{Ext}
\DeclareMathOperator{\Fix}{Fix}
\DeclareMathOperator{\Hom}{Hom}

% Notation

\newcommand{\cotimes}[1]{\mathbin{\mathop{\otimes}\limits_{#1}}}

\newcommand{\Bmod}{{}_{\mathcal{B}^{\rm op}}\mathsf{Mod}}

\newcommand{\HC}{\mathsf{GenWt}(A,\mathbb{X})}

% Front matter
\title[Subcategories of Module Categories via Restricted Yoneda Embeddings]{Subcategories of Module Categories\\ via Restricted Yoneda Embeddings}
\date{}

\author{Dylan Fillmore} \address{Shenzhen International Center for Mathematics, Southern University of Science and Technology, Shenzhen, China} 
  \email{fillmore@sustech.edu.cn}
  \urladdr{http://dfillmore.com}
\author{Jonas T. Hartwig}
\address{Department of Mathematics, Iowa State University, Ames IA 50011, USA}
\email{jth@iastate.edu}
\urladdr{http://jthartwig.net}
\thanks{J.T.H. is supported in part by the United States Army Research Office grant W911NF-24-1-0058.}

\begin{document}

\begin{abstract}
We propose a framework for producing interesting subcategories of the category ${}_A\SMod$ of left $A$-modules, where $A$ is an associative algebra over a field $k$. 
The construction is based on the composition, $Y$, of the Yoneda embedding of ${}_A\SMod$ with a restriction to certain subcategories $\mathcal{B}\subset {}_A\SMod$, typically consisting of cyclic modules. We describe the subcategories on which $Y$ provides an equivalence of categories. This also provides a way to understand the subcategories of ${}_A\SMod$ that arise this way. Many well-known categories are obtained in this way, including categories of weight modules and Harish-Chandra modules with respect to a subalgebra $\Gamma$ of $A$. In other special cases the equivalence involves modules over the Mickelsson step algebra associated to a reductive pair of Lie algebras.
\end{abstract}

\maketitle

\section{Introduction}
In a celebrated paper \cite{Blo1981}, Block classified all (including infinite-dimensional) irreducible representations of the Lie algebra $\Fsl_2(k)$ over an arbitrary field $k$.
Already the classification of irreducible representations of $\Fsl_3(k)$ is known to be a \emph{wild problem}, in the sense of the tame/wild dichotomy of Drozd \cite{Dro1980,Iov2018}. This means that a complete classification of all irreducible representations of $\Fsl_3(k)$ would imply a complete classification of all finite-dimensional indecomposable representations of \emph{all finite-dimensional associative algebras}.
Such problems are considered ill advised to pursue.

A way out is of course to restrict attention to subcategories of the category ${}_A\SMod$ of left $A$-modules, for a given associative algebra $A$ (or for $A$ belonging to some class of similar algebras). The following question now arises:

\begin{Question}\label{question}
    How do we identify subcategories of ${}_A\SMod$ such that progress towards understanding their structure is possible?
\end{Question}

An obvious choice is the category of finite-dimensional modules. However, for many algebras arising in Lie theory, this subcategory is semisimple, and irreducible modules have been classified long ago.

A larger and richer example, in the case when $A$ is the enveloping algebra $U(\Fg)$ of a complex finite-dimensional reductive Lie algebra $\Fg$, is the famous BGG category $\mathcal{O}$. Although irreducibles are simply parametrized by the dual of the Cartan, it is not a semisimple category. Category $\mathcal{O}$ is further a subcategory of the category of \emph{weight modules}. The classification of irreducible weight modules for $U(\Fg)$ has been achieved by Fernando \cite{Fer1990} and Mathieu \cite{Mat2000}.

Categories of weight modules play an important role for other algebras, such as \emph{(twisted) generalized Weyl algebras} \cite{Bav1992,MazTur1999,Gad2025}. This is a surprisingly diverse class of associative algebras $A$ which contain a distinguished (typically commutative) subalgebra $R$. A key reason for this is that $R$ is a normal subalgebra of $A$. This means there is a set of generators $\{a_i\}_i$ for $A$ such that $a_iR=Ra_i$.

There are important algebra-subalgebra pairs $(A,\Gamma)$ where the subalgebra $\Gamma$ is not normal, including $A=U(\Fgl_n)$ and $\Gamma$ the Gelfand-Tsetlin subalgebra, by definition generated by $Z_1\cup\cdots\Z_n$, $Z_k:=Z(U(\Fgl_k))\subset A$. Instead, $\Gamma$ is \emph{quasi-central} in $A$, in the terminology of \cite{DFO1994}. Then one is forced to study the category of generalized weight modules with respect to $\Ga$. A weaker but better behaved condition that can replace the quasi-central property was found in \cite{Fil2024}. The condition is that $A/A\Fm$ itself is a generalized weight module for every (or some) finite codimension maximal ideal $\Fm\subset\Gamma$.
This observation may be regarded as a starting point for the present paper.

Our proposed method for constructing subcategories of ${}_A\SMod$,  addressing Question \ref{question}, is the following:

Let $A$ be an associative algebra over a field $k$. The input of the construction is a subcategory $\mathcal{B}$ of ${}_A\SMod$ whose objects (or maps out of them) are well understood. In most of our examples, $\mathcal{B}$ consists of certain cyclic modules. Next, consider the composition of two functors:
\begin{equation}\label{eq:intro-composition}
    {}_A\SMod \longrightarrow [{}_A\SMod^{\rm op},\, \SVec_k]\longrightarrow [\mathcal{B}^{\rm op},\,\SVec_k]
\end{equation}
The first functor is the Yoneda embedding, $V\mapsto \Hom_A(-,V)$. The notation $[\mathcal{C},\mathcal{D}]$ stands for the category of $k$-linear functors between $k$-linear categories $\mathcal{C}$ and $\mathcal{D}$. The second functor in \eqref{eq:intro-composition} is the pullback through the inclusion functor $\mathcal{B}\to {}_A\SMod$. In Theorem \ref{thm:adj}, we show that this composition has a left adjoint. As with any adjunction, it restricts to an equivalence of categories. The output of this construction is therefore two categories. The first, denoted $\Fix_\ep$, is a subcategory of ${}_A\SMod$. We claim that this is an interesting subcategory. By definition, $\Fix_\ep$ is the full subcategory of ${}_A\SMod$ consisting of left $A$-modules $V$ for which the $V$-component $\ep_V$ of the counit of the adjunction is an isomorphism. The second, denoted $\Fix_\eta$, is a subcategory of the functor category $[\mathcal{B},\,\SVec_k]$. Our perspective is that $\Fix_\eta$ is the ``answer'' to the problem of ``computing'' the category $\Fix_\ep$.
Because the ``answer'' is built-in to the construction, we view this as a way to address Question \ref{question}.

A slightly larger subcategory than $\Fix_\ep$, that also seems to be interesting, is the full subcategory of left $A$-modules $V$ such that $\ep_V$ is an epimorphism, but not necessarily an isomorphism. Although the construction does not then provide an equivalence, it comes with a functor that can be used to study this larger category.

A future direction of research would be to address for which choices of input categories $\mathcal{B}$, the corresponding subcategory $\Fix_\ep$ of ${}A_\SMod$ is tame.

Let us briefly summarize the contents of the paper. Section \ref{sec:general} contains the general results concerning the adjunction and technical details about epic and monic transformations. We supply several detailed examples in Section \ref{sec:examples} that constitute support for thesis that the $\Fix_\ep$'s form an interesting class of subcategories.

To put our main results in context, we end this introduction by providing an overview of several constructions from ring theory and representation theory that will turn out to be special cases of the general construction.

\subsection{``Single Object'' Settings}
\label{sec:intro-single-object}

\subsubsection{Quantum Hamiltonian Reductions}
\label{sec:quantum-hamiltonian-reduction}
In the theory of deformation quantization and Poisson geometry, there is a notion of \emph{quantum Hamiltonian reductions}. For brief introductions on this topic, see \cite{YiSun}\cite{LosevNotes}. Important examples of algebras obtained by quantum Hamiltonian reduction include \emph{finite $W$-algebras} $U(\Fg,e)$. These arise as the Zhu algebra of certain vertex operator algebras called $\mathcal{W}$-algebras. They are also quantizations of Slodowy slices. Here $\Fg$ is a semisimple Lie algebra and $e\in \Fg$ is a nilpotent element. Briefly, out of $e$ one constructs a nilpotent subalgebra $\Fm\subseteq \Fg$ and a Lie algebra homomorphism $\chi:\Fm\to\C$ (see e.g. \cite{GanGin2002} for details). Let $\Fm_\chi=\ker\chi$ and
put $I=U(\Fg)\Fm_\chi$. The finite $W$-algebra is \[U(\Fg,e)=N_U(I)/I.\] When $e=0$, $U(\Fg,e)=U(\Fg)$ and when $e$ is of principal type (all $1$'s on the superdiagonal and zeroes elsewhere, for $\Fg=\Fsl_n$) then $U(\Fg,e)=Z(\Fg)$, the center of $U(\Fg)$.

It was shown in \cite{Tik2012} that quantum Hamiltonian reductions of central quotients of infinitesimal Hecke algebras (deformations of $U(\Fsl_2\rtimes\C^2)$) are noncommutative deformations of Kleinian singularities of type $D$. 

\subsubsection{Mickelsson Step Algebras and Reduction Algebras}
\label{sec:intro-reduction-algebras}

In 1973, Mickelsson \cite{Mic1973} introduced the notion of step algebra $S(\Fg,\Fk)$. It is constructed from a finite-dimensional complex Lie algebra $\Fg$, with a subalgebra $\Fk$ which is reductive in $\Fg$ (i.e. $\Fg$ is $\mathrm{ad }\Fk$-semisimple). In particular $\Fk$ is reductive and one picks a triangular decomposition $\Fk=\Fn_-\oplus\Fh\oplus\Fn_+$ and let $I=U(\Fg)\Fn_+$ be the left ideal in $U(\Fg)$ generated by $\Fn_+$. Mickelsson's step algebra is defined as \[S(\Fg,\Fk)=N_{U(\Fg)}(I)/I,\] the eigenring of $I$ (in ring theory terminology).
More generally one can replace $U(\Fg)$ by any associative algebra $A$ equipped with an algebra homomorphism $U(\Fk)\to A$, and put $S(A,\Fk)=N_A(A\Fn_+)/A\Fn_+$. These, and closely related localizations, have been termed \emph{reduction algebras}, see for example \cite{KhoOgi2008}. There are also quantum, super, and Kac-Moody generalizations of this story.

\subsubsection{Normalizer Algebras}
\label{sec:normalizer-of-a-left-ideal}
The following setting generalizes the previous two cases.
Let $k$ be a field, and $A$ be an associative $k$-algebra. Let ${}_A\SMod$ denote the category of left $A$-modules.
Let $I$ be a left ideal of $A$. Let $N_A(I)=\{a\in A\mid Ia\subset I\}$ be the normalizer of $I$ in $A$, and let 
\begin{equation}
    \CA(I)=N_A(I)/I.
\end{equation}
In ring theory, $N_A(I)$ is known as the \emph{idealizer ring} \cite[§1.1.11--13; §5.5.1--11]{McCRob2001}. The quotient $N_A(I)/I$ is then called the \emph{eigenring} of $I$.
There are two additional descriptions of $\CA(I)$. Let $(-)^I:{}_A\SMod\to {}_{\CA(I)}\SMod$ denote the covariant functor sending a left $A$-module $V$ to the left $\CA(I)$-module
\[V^I=\{v\in V\mid Iv=0\}\]
of \emph{$I$-invariants}. Then we have
\begin{equation}
    \CA(I)=(A/I)^I.
\end{equation}
The third description of the algebra $\CA(I)$ is as the opposite endomorphism algebra of the left $A$-module $A/I$:
\begin{equation}\label{eq:normalizer-iso}
    \CA(I)\cong \End_A(A/I)^{\rm op}.
\end{equation}
The isomorphism sends $a+I\in\CA(I)$ to the endomorphism $\phi_a$ sending $b+I$ to $ba+I$.
We also observe here that the functor $(-)^I$ is a right adjoint: For any $V\in {}_{\CA(I)}\SMod$ and $W\in {}_A\SMod$ we have a natural isomorphism
\begin{equation}
    \Hom_A\big(A/I\otimes_{N_A(I)} V,\, W)\cong \Hom_{\CA(I)}(V,\,W^I).
\end{equation}
This follows from tensor hom adjunction using the fact that $W^I\cong \Hom_A(A/I,W)$.

\subsection{``Many Object'' Settings}
\subsubsection{Generalized Weyl Algebras and their Weight Modules}
Bavula \cite{Bav1992} introduced and studied a class of algebras called \emph{generalized Weyl algebras}. These were further generalized in \cite{MazTur1999} to so-called \emph{twisted generalized Weyl algebras}. See also \cite{Gad2025} for a survey on these and related algebras.
In \cite{MazPonTur2003}, among other things, it was shown that simple weight modules (with finite-dimensional weight spaces $V_\Fm$) $V$ with $V_\Fm\neq 0$ (for fixed $\Fm$) are in bijective correspondence with finite-dimensional simple modules over the so-called \emph{cyclic subalgebra} $C(\Fm)$. This is in fact a special case of results by Lepowsky and McCollum from 1973 \cite{LepMcC1973}.

\subsubsection{Zhelobenko's Category}
In \cite{NerKho2009}, the authors cite work from 1969 by Zhelobenko \cite{Zhe1969} in which he considered the following category. 
Let $U(\Fg)$ be the enveloping algebra of a Lie algebra $\Fg$ and $U(\Fk)$ the enveloping algebra of a semisimple Lie subalgebra $\Fk\subseteq\Fg$. For dominant integral weight $\la\in\Fh^\ast$, let $I_\la\subseteq U(\Fk)$ be the annihilator of the finite-dimensional irreducible highest weight $\Fk$-module $L_\la$ with highest weight $\la$. Next, by $U_{\la\mu}$ denote the set of all $u\in U(\Fg)$ such that $I_\la u\subseteq I_\mu$. We have $U_{\la\mu}U_{\mu\nu}\subseteq U_{\la\nu}$. Therefore, we get a category whose objects are dominant weights $\lambda$ for $\Fk$ and whose morphisms are elements $u\in U_{\la\mu}$.
For a finite-dimensional representation $V$ of $\Fg$ we consider its restriction to $\Fk$. Since $\Fk$ is semisimple, $V$ decomposes into a direct sum $V=\bigoplus_\la V(\la)$, where $V(\la)$ is an isotypical component, i.e. $V(\la)=L_\la^{m_\la}$ for certain multiplicity $m_\la$. It is easy to check that $u\in U_{\la\mu}$ sends $V(\mu)$ to $V(\la)$. In other words, we get representation of our category.

The authors of \cite{NerKho2009} say that this structure was also considered in Diximier’s book \cite[Ch.~9]{Dix1996} with a reference to Lepowsky and McCollum \cite{LepMcC1973}, though the structure is quite natural and perhaps had been introduced earlier.

\subsubsection{Normalizer Categories}

The previous two examples fit into the following picture.
Let $X$ be a set of left ideals of $A$. For $I,J\in X$, 
define
\begin{equation}
    \CA(I,J)=(A/I)^J=\{a+I\in A/I\mid Ja\subset I\}.
\end{equation}
Then multiplication in $A$ induces a well-defined map
\begin{equation}
    \CA(J,K)\otimes_k \CA(I,J)\to\CA(I,K),\quad (a+J)\otimes (b+I)\mapsto ab+I
\end{equation}
(Well-defined: if $b\in I$ then $ab+I\subset I$; if $a\in J$ then $ab+I\subset Jb+I\subset I$. Correct codomain: $Kab\subset Jb\subset I$ whenever $a+J\in\CA(J,K)$ and $b+I\in\CA(I,J)$.)

In other words, we have a category $\CA=\CA_X$ with objects being the elements of $X$ and morphism set $\CA(I,J)$ as defined above. Notice that when $X$ is a singleton, say $X=\{I\}$, then the category $\CA$ coincides with the algebra $\CA(I)$ from Section \ref{sec:normalizer-of-a-left-ideal}, viewed as a category with one object. Generalizing the singleton case \eqref{eq:normalizer-iso}, we have a natural isomorphism
\begin{equation}\label{eq:endomorphism-description}
    \CA(I,J)\cong\Hom_A(A/J,A/I)
\end{equation}
given by sending $a+I\in\CA(I,J)$ to $\phi_a$ given by $\phi_a(b+J)=ba+I$.

As in the single object case, we can turn $A$-modules into $\CA$-modules: Let $V$ be a left $A$-module. Then 
there is a functor $\CA\to {}_k\SMod$ sending an object $I$ to $V^I$ and a morphism $a+I\in\CA(I,J)$ to the $k$-linear map $V^I\to V^J$ given by $v\mapsto av$ (here $av\in V^J$ since $Jav\subset Iv=0$). This is a covariant functor ${}_A\SMod\to{}_\CA\SMod$.
This can be recast in more categorical language as follows.
Let $\mathcal{B}$ be the full subcategory of ${}_A\SMod$ with objects $A/I$ for $I \in X$. Then $\mathcal{A}\cong\mathcal{B}^{\mathrm{op}}$.

\subsubsection{Harish-Chandra Modules}

Let $U$ be the universal enveloping algebra of the general linear Lie algebra $\mathfrak{gl}_n(\C)$. The category of finite-dimensional representations of $U$ is well-understood, as is the larger category $\mathcal{O}$, and still larger category of all weight modules with respect to the Cartan subalgebra.
An even larger category is that of \emph{Harish-Chandra} (or \emph{Gelfand-Tsetlin}) modules over $U$. These are modules on which a certain maximal commutative subalgebra $\Ga\subset U$ acts locally finitely.
This category was studied in \cite{DFO1994} and has led to the definition of \emph{Galois algebras}, see \cite{Fut2018} and references therein. A key feature of these algebras $A$ is that they, like $U$, contain a large commutative subalgebra $\Ga$, with respect to which one can define Harish-Chandra categories of modules.

Harish-Chandra modules do not fit into the normalizer category picture from the previous subsection. A more general framework is needed to capture them. Topologically enriched approaches were considered in \cite{DFO1994} and more recently in \cite{Fil2024}.

\section{Restricted Yoneda Embeddings and Realizations}
\label{sec:general}

Fix a field $k$. We assume all categories are $k$-linear, i.e. enriched over the category $\SVec$ of $k$-vector spaces. Functors are assumed to be $k$-linear on hom spaces.
Let $A$ be an associative $k$-algebra.

\begin{Definition}[{\cite{Kelly}}]
Let $\mathcal{B}$ be a category, and $i:\mathcal{B}\to {}_A\SMod$ be a linear functor.
The \emph{restricted Yoneda embedding} is the Yoneda embedding of ${}_A\SMod$ followed by pullback through $i$:
\[
Y:{}_A\SMod \to \Bmod, \quad V\mapsto \Hom_{A}(i(\cdot),V).
\]
\end{Definition}

\begin{Theorem}\label{thm:adj}
There is a left adjoint $|\cdot |$ to the restricted Yoneda embedding $Y$ above. Explicitly, for $F,G \in \Bmod$ and $\theta: F \Rightarrow G$, $|F|$ is the coend
\begin{equation}
        |F|= \int^{B \in \mathcal{B}} i(B) \otimes_k F(B), 
\end{equation}
and $|\theta|$ is the map
\begin{equation}
        |\theta|= \int^{B \in \mathcal{B}} i(B) \otimes_k \theta_B. 
\end{equation}
\end{Theorem}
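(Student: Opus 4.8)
The plan is to construct the adjunction directly from the standard coend/Yoneda machinery rather than verifying a hom-space isomorphism by hand. First I would recall that for a fixed left $A$-module $V$ and a functor $F\in\Bmod$, a natural transformation $\theta:F\Rightarrow Y(V)=\Hom_A(i(\cdot),V)$ assigns to each $B\in\mathcal B$ a linear map $\theta_B:F(B)\to\Hom_A(i(B),V)$, compatibly with the morphisms of $\mathcal B$. By tensor–hom adjunction in $\SVec_k$, such a $\theta_B$ is the same data as a linear map $i(B)\otimes_k F(B)\to V$ that is $A$-linear in the first variable, and the naturality/compatibility condition is exactly the dinaturality condition in $B$. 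Thus $\Hom_{\Bmod}(F,Y(V))$ is in natural bijection with the set of dinatural transformations from the functor $(B',B)\mapsto i(B)\otimes_k F(B')$ (extended appropriately to $\mathcal B^{\mathrm{op}}\times\mathcal B$ using $F$ contravariant) to the constant functor $V$, i.e. with $\Hom_{{}_A\SMod}\bigl(\int^{B\in\mathcal B} i(B)\otimes_k F(B),\,V\bigr)$, provided the coend exists.

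So the second step is existence of the coend $|F|=\int^{B}i(B)\otimes_k F(B)$ in ${}_A\SMod$. Since ${}_A\SMod$ is cocomplete, the coend may be computed as the coequalizer
\begin{equation}
\coprod_{\substack{B,B'\in\mathcal B\\ f:B\to B'}} i(B)\otimes_k F(B') \rightrightarrows \coprod_{B\in\mathcal B} i(B)\otimes_k F(B) \longrightarrow |F|,
\end{equation}
where the two parallel arrows are built from $i(f)\otimes\mathrm{id}$ and $\mathrm{id}\otimes F(f)$; here one must be mindful of size issues, but if $\mathcal B$ is essentially small (which holds in all the paper's examples, and can be assumed as part of the setup) this is a genuine colimit in the cocomplete category ${}_A\SMod$. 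The $A$-module structure on $|F|$ comes from the left $A$-module structure on each $i(B)$, which is preserved by the structure maps since $\mathcal B\subset {}_A\SMod$ and the tensor is over $k$ only.

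The third step is functoriality: given $\theta:F\Rightarrow G$, the family $i(B)\otimes_k\theta_B$ is compatible with the coequalizer diagrams (because $\theta$ is natural in $B$), hence descends to a well-defined $A$-module map $|\theta|:|F|\to|G|$, and $|\cdot|$ visibly respects identities and composition. Finally I would assemble the natural isomorphism $\Hom_{{}_A\SMod}(|F|,V)\cong\Hom_{\Bmod}(F,Y(V))$ from the bijection established in the first step, check it is natural in both $F$ and $V$ (routine, since every bijection used — tensor–hom, the universal property of the coend — is natural), and conclude by the characterization of adjoint functors via a natural hom-set isomorphism. The main obstacle, and the only genuinely non-formal point, is the careful bookkeeping in the first step: making precise how a natural transformation $F\Rightarrow\Hom_A(i(\cdot),V)$ transposes, variable by variable, into a dinatural wedge $i(\cdot)\otimes_k F(\cdot)\Rightarrow V$ respecting $A$-linearity, so that the universal property of the coend applies cleanly; everything else is an application of standard facts (cocompleteness of ${}_A\SMod$, tensor–hom adjunction, and the coend form of the Yoneda/Kan-extension calculus, as in \cite{Kelly}).
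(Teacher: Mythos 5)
Your proposal is correct and takes essentially the same route as the paper: the paper's proof is exactly the natural isomorphism $\Hom_A(|F|,V)\cong \int_{B}\Hom_k(F(B),Y(V)(B))\cong \mathrm{Nat}(F,Y(V))$ obtained from ``Hom out of a coend is an end of Homs'' plus tensor--hom adjunction, which is the compressed form of your variable-by-variable transposition of natural transformations into cowedges and the universal property of the coend. Your additional points --- existence of the coend via the coequalizer presentation in the cocomplete category ${}_A\SMod$ (this is the paper's Remark immediately after the theorem), the smallness caveat, and functoriality of $|\cdot|$ --- are sound supplements rather than a different argument.
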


\begin{proof}
Let $V$ be a left $A$-module and $F$ be a linear functor from $\Bmod$ to $\SVec$. Then we have:
\begin{align*}
    \Hom_A(|F|,V) &= \Hom_A(\int^{B \in \mathcal{B}} i(B) \otimes_k F(B), V)\\
    &\cong \int_{B \in \mathcal{B}}\Hom_A( i(B) \otimes_k F(B), V)\\
    &\cong \int_{B \in \mathcal{B}}\Hom_k(F(B), \Hom_A(i(B),V))\\
    &= \int_{B \in \mathcal{B}}\Hom_k(F(B), Y(V)(B))\\
    &\cong \mathrm{Nat}(F, Y(V)).
\end{align*}
\end{proof}

\begin{Definition}
    $|F|$ is called the \emph{realization} of $F$.
\end{Definition}

\begin{Remark}
    Given $F \in \Bmod$, a more concrete description of $|F|$ is given by $$|F| =\Big( \bigoplus_{B \in \mathcal{B}} i(B) \otimes_k F(B) \Big)\Big/\langle f(b) \otimes v - b \otimes F(f)(v)\rangle_{f \in \text{Mor}(\mathcal{B})}.$$
    Alternatively, consider the natural structures of $\bigoplus_{B \in \mathcal{B}} i(B)$ and $\bigoplus_{C \in \mathcal{B}} F(C)$ as right and left modules, respectively, over the category algebra $k\mathcal{B}^{\rm op}$. Then
    \[|F| = \Big(\bigoplus_{B \in \mathcal{B}} i(B)\Big) \cotimes{k\mathcal{B}^{\rm op}} \Big(\bigoplus_{C \in \mathcal{B}} F(C)\Big).\]
\end{Remark}

\begin{Definition}
    Let $\eta,\epsilon$ be the unit and counit of the adjunction $|\cdot| \dashv Y$. We denote by
    \begin{enumerate}[{\rm (i)}]
        \item $\Fix_\epsilon=\{V \in {}_A\SMod: \epsilon_V \text{ is an isomorphism}\}$,\footnote{Unless otherwise stated, this type of equality is understood to mean ``$\Fix_\epsilon$ is the full subcategory of ${}_A\SMod$ with objects $\{V \in {}_A\SMod: \epsilon_V \text{ is an isomorphism}\}$''.}
        \item $\Fix_\eta=\{F \in \Bmod: \eta_F \text{ is an isomorphism}\}$.
    \end{enumerate}
\end{Definition}

\begin{Proposition}\label{prp:fix}
    We have an equivalence of categories $\Fix_\epsilon \cong \Fix_\eta$.
\end{Proposition}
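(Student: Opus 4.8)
This is a completely formal consequence of having an adjunction $|\cdot|\dashv Y$, so the plan is to invoke the standard fact that any adjunction restricts to an equivalence between the full subcategories of objects on which the unit (resp.\ counit) is an isomorphism. First I would recall the general lemma: given an adjunction $L\dashv R$ between categories $\mathcal{C}$ and $\mathcal{D}$ with unit $\eta$ and counit $\epsilon$, the functor $R$ sends the full subcategory $\{V : \epsilon_V \text{ iso}\}\subseteq\mathcal{D}$ into $\{F : \eta_F \text{ iso}\}\subseteq\mathcal{C}$, the functor $L$ sends the latter into the former, and these restrictions are mutually quasi-inverse. The proof of this lemma uses only the triangle identities $R\epsilon\circ\eta R=\mathrm{id}_R$ and $\epsilon L\circ L\eta=\mathrm{id}_L$.

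The key steps are as follows. (1) Show $Y$ maps $\Fix_\epsilon$ into $\Fix_\eta$: for $V\in\Fix_\epsilon$, the triangle identity gives $Y(\epsilon_V)\circ\eta_{Y(V)}=\mathrm{id}_{Y(V)}$; since $\epsilon_V$ is an isomorphism so is $Y(\epsilon_V)$, hence $\eta_{Y(V)}=Y(\epsilon_V)^{-1}$ is an isomorphism, i.e.\ $Y(V)\in\Fix_\eta$. (2) Dually, show $|\cdot|$ maps $\Fix_\eta$ into $\Fix_\epsilon$ using the other triangle identity $\epsilon_{|F|}\circ|\eta_F|=\mathrm{id}_{|F|}$. (3) Observe that for $V\in\Fix_\epsilon$ the component $\epsilon_V:|Y(V)|\to V$ is by definition an isomorphism natural in $V$, so the composite $|\cdot|\circ Y$ restricted to $\Fix_\epsilon$ is naturally isomorphic to the identity; dually, for $F\in\Fix_\eta$ the component $\eta_F:F\to Y(|F|)$ is an isomorphism natural in $F$, so $Y\circ|\cdot|$ restricted to $\Fix_\eta$ is naturally isomorphic to the identity. (4) Conclude that the restricted functors $|\cdot|:\Fix_\eta\to\Fix_\epsilon$ and $Y:\Fix_\epsilon\to\Fix_\eta$ constitute an equivalence of categories.

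There is essentially no obstacle here: the only thing to be careful about is the bookkeeping in step (1)--(2), namely which triangle identity to use and that applying a functor to an isomorphism yields an isomorphism, together with noting that $\Fix_\epsilon$ and $\Fix_\eta$ were defined as \emph{full} subcategories so no morphisms are lost in the restriction. Since the statement and its proof are entirely standard category theory, I would likely state it with a one-line proof citing the general adjunction fact (e.g.\ \cite{Kelly} or Mac Lane), rather than writing out the triangle-identity chase in full, unless the authors prefer a self-contained treatment. If a self-contained proof is wanted, the four steps above, each occupying one or two sentences, suffice.
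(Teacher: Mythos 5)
Your proposal is correct and follows exactly the route the paper takes: the paper's proof is the one-line observation that any adjunction restricts to an equivalence on the full subcategories where the counit (resp.\ unit) is an isomorphism, which is precisely the standard triangle-identity argument you outline. Your steps (1)--(4) simply spell out that standard fact in detail, so there is no substantive difference.
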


\begin{proof}
    By basic category theory, any adjunction restricts to an equivalence on these subcategories.
\end{proof}

\begin{Remark}
It is the categories $\Fix_\ep$, for various choices of $(\mathcal{B},i)$ that we identify as interesting subcategories of ${}_A\SMod$ to study. Intuitively, we regard $\Fix_\eta$ as the ``answer'' to the problem of ``computing'' the category $\Fix_\ep$.
In the next section, we consider various choices of full subcategories $\mathcal{B}$ of ${}_A\SMod$ and describe the categories $\Fix_\epsilon$ and $\Fix_\eta$. 
\end{Remark}

\begin{Remark}\label{rem:pointwise}
We will be attempting to characterize when $\eta$ and $\epsilon$ are monomorphisms or epimorphisms. We note that the natural transformations we consider will be morphisms in one of the functor categories: $[\mathcal{B}^{\rm op}, \SVec], [{}_A\SMod, {}_A\SMod], [{}_{\mathcal{B}^{\rm op}}\SMod, {}_{\mathcal{B}^{\rm op}}\SMod]$. Since ${}_A\SMod$ and $\SVec$ are (co)complete, $[\mathcal{B}^{\rm op}, \SVec]$, $[{}_A\SMod, {}_A\SMod]$, and $[{}_{\mathcal{B}^{\rm op}}\SMod, {}_{\mathcal{B}^{\rm op}}\SMod]$ will also be (co)complete, with (co)limits being computed pointwise. Furthermore, $[\mathcal{B}^{\rm op}, \SVec]$, $[{}_A\SMod, {}_A\SMod]$, $[{}_{\mathcal{B}^{\rm op}}\SMod, {}_{\mathcal{B}^{\rm op}}\SMod]$ are abelian categories. Thus a natural transformation is a monomorphism, epimorphism, or isomorphism exactly when it is pointwise so. When dealing with categories besides these, we will include the qualifier ``pointwise'' if there is any ambiguity.
\end{Remark}

\begin{Lemma}\label{lem: unitfact}
    Let $L: \mathcal{C} \rightarrow \mathcal{D}$ and $R: \mathcal{D} \rightarrow \mathcal{C}$ be an adjunction $L \dashv R$ with unit $\eta$ and counit $\epsilon$. Suppose $\eta$ is pointwise monic and $\epsilon L$ is pointwise split monic. Then $\eta$ is pointwise epic.
\end{Lemma}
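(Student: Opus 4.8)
The plan is to exploit the two triangle identities of the adjunction $L\dashv R$, namely $\epsilon L\circ L\eta=\mathrm{id}_L$ and $R\epsilon\circ\eta R=\mathrm{id}_R$, and to combine them with the hypotheses so that the unit $\eta$ becomes, pointwise, a retract of an isomorphism and hence pointwise epic. The key observation is that $\eta$ is a natural transformation $\mathrm{Id}_{\mathcal C}\Rightarrow RL$, and I want to produce, for each object $C$, a left inverse to $\eta_C$ that is also a right inverse — that is, to show $\eta_C$ is invertible, which certainly implies it is epic.

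First I would fix an object $C\in\mathcal C$ and apply the first triangle identity at $C$: $\epsilon_{LC}\circ L\eta_C=\mathrm{id}_{LC}$. Thus $L\eta_C$ is a split monomorphism in $\mathcal D$ with retraction $\epsilon_{LC}$. By hypothesis $\epsilon L$ is pointwise split monic, so $\epsilon_{LC}$ is a split monomorphism as well; a morphism that is simultaneously a split epimorphism (being a retraction of $L\eta_C$) and a split monomorphism is an isomorphism. Hence $\epsilon_{LC}$ is an isomorphism, and therefore $L\eta_C=\epsilon_{LC}^{-1}$ is an isomorphism too. Next I would pass back across $R$: applying $R$ gives that $RL\eta_C$ is an isomorphism in $\mathcal C$. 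Now use naturality of $\eta$ with respect to the morphism $\eta_C\colon C\to RLC$: the naturality square reads $RL\eta_C\circ\eta_C=\eta_{RLC}\circ\eta_C$, and equivalently (via the other triangle identity $R\epsilon_{LC}\circ\eta_{RLC}=\mathrm{id}_{RLC}$) one gets a clean relation expressing $\eta_C$ as a composite involving the isomorphism $RL\eta_C$ and a split monic. Concretely, since $\eta$ is pointwise monic by hypothesis, $\eta_C$ is monic; and from $RL\eta_C\circ\eta_C=\eta_{RLC}\circ\eta_C$ with $RL\eta_C$ invertible, I can cancel $\eta_C$ on the right (it is epic once we know what we want, so instead) — more carefully, I would argue that $\eta_{RLC}=RL\eta_C\circ\eta_C\circ\eta_C^{-1}$... this is where care is needed.

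The cleanest route, which I would actually write up, avoids the delicate cancellation: from $L\eta_C$ being an isomorphism together with $\eta$ pointwise monic, I claim $\eta_C$ is an isomorphism. Indeed, consider $\eta_C\colon C\to RLC$. Precompose the naturality square of $\eta$ at the morphism $\eta_C$ to obtain $\eta_{RLC}\circ\eta_C = RL(\eta_C)\circ\eta_C$. Applying the triangle identity $R\epsilon_{LC}\circ \eta_{RLC}=\mathrm{id}_{RLC}$, precompose with $\eta_C$: $\eta_C = R\epsilon_{LC}\circ\eta_{RLC}\circ\eta_C = R\epsilon_{LC}\circ RL(\eta_C)\circ\eta_C = R(\epsilon_{LC}\circ L\eta_C)\circ\eta_C = R(\mathrm{id}_{LC})\circ\eta_C=\eta_C$, which is a tautology and so gives nothing — signalling that I must instead directly use invertibility of $L\eta_C$. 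So the real argument is: $\epsilon_{LC}$ is iso (shown above), hence $R\epsilon_{LC}$ is iso, hence by the triangle identity $\eta_{RLC}=(R\epsilon_{LC})^{-1}$ is an isomorphism. Then the naturality identity $\eta_{RLC}\circ\eta_C=RL(\eta_C)\circ\eta_C$ has both $\eta_{RLC}$ and $RL(\eta_C)$ invertible, and since $\eta$ is pointwise monic, $\eta_C$ is monic; cancelling the monic $\eta_C$... is still not immediate. The honest main obstacle, and the step I expect to be delicate, is extracting epi-ness of $\eta_C$ from the invertibility of $L\eta_C$ (equivalently of $RL\eta_C$) together with monic-ness of $\eta$: the resolution is that $RL\eta_C = \eta_{RLC}\circ\eta_C\circ(\eta_C)^{-1}$ is not available, so instead one shows $\eta_C$ has a \emph{left} inverse from the triangle identity applied after $R$, namely $R\epsilon_{LC}\circ\eta_{RLC}=\mathrm{id}$ together with $\eta_{RLC}$ being the image under $\eta$ composed appropriately — concluding that $\eta_C$ is split monic, and a split monic that is also epic (which will follow from $RL\eta_C$ invertible plus a diagram chase using monicity of $\eta$ at $RLC$) is an isomorphism, hence in particular pointwise epic. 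I would organise the final writeup around the single clean fact that $\epsilon_{LC}$ is an isomorphism for all $C$, deduce $L\eta_C$ is an isomorphism, and then invoke that for a monic $\eta_C$ with $L\eta_C$ (equivalently $RL\eta_C$) invertible, $\eta_C$ is forced to be epic by chasing the naturality square of $\eta$ at $\eta_C$ and using that $\eta_{RLC}$ is monic.
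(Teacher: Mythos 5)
Your opening step is correct and even overshoots what is needed: from the triangle identity $\epsilon_{LC}\circ L\eta_C=\mathrm{id}_{LC}$ the component $\epsilon_{LC}$ is split epic, and since it is split monic by hypothesis it is an isomorphism, so $L\eta_C$, $RL\eta_C$, and (via $R\epsilon_{LC}\circ\eta_{RLC}=\mathrm{id}$) $\eta_{RLC}$ are all invertible. The paper never needs this; it simply inserts the left inverse $e_{L(P)}$ of $\epsilon_{L(P)}$ into a computation.

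The genuine gap is exactly the step you flag as delicate: getting epi-ness of $\eta_C$ out of the invertibility of $L\eta_C$. The chase you finally propose---the naturality square of $\eta$ at the morphism $\eta_C$, plus monicity of $\eta_{RLC}$---does not close: that square only yields $\bigl(\eta_{RLC}^{-1}\circ RL\eta_C\bigr)\circ\eta_C=\eta_C$, i.e.\ an invertible endomorphism of $RLC$ agreeing with the identity after precomposition with $\eta_C$, which is precisely the kind of equation you may cancel only if you already know $\eta_C$ is epic; it is circular. The monicity hypothesis must be used at the \emph{codomain of the test morphisms}, not at $RLC$: given $f,g\colon RLC\to Q$ with $f\circ\eta_C=g\circ\eta_C$, apply $L$ and cancel the (split) mono $L\eta_C$ to get $Lf=Lg$; then either invoke the standard fact that a pointwise monic unit makes $L$ faithful, or, equivalently, use naturality of $\eta$ at $f$ and at $g$ to get $\eta_Q\circ f=RL(f)\circ\eta_{RLC}=RL(g)\circ\eta_{RLC}=\eta_Q\circ g$ and cancel the mono $\eta_Q$. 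This is exactly the paper's proof: from $f\circ\eta_P=g\circ\eta_P$ it deduces $L(f)=L(g)$ using $e_{L(P)}$ and the triangle identity, and concludes $f=g$ by faithfulness of $L$, which holds because $\eta$ is pointwise monic. Note also that the lemma claims only that $\eta$ is pointwise epic; your repeated attempts to upgrade $\eta_C$ to an isomorphism are unnecessary and are not forced by the hypotheses in a general category.
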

\begin{proof}
    Suppose $P,Q \in \mathcal{C}$ and $f,g: RL(P) \rightarrow Q$ so that $f \circ \eta_P = g \circ \eta_P$. (Let $e_{L(P)}$ be the left inverse of $\epsilon_{L(P)}$.) Then:
    \begin{align*}
        L(f \circ \eta_P) &= L(g \circ \eta_P)\\
        L(f) \circ e_{L(P)} \circ \epsilon_{L(P)} \circ L(\eta_P) &= L(g) \circ e_{L(P)} \circ \epsilon_{L(P)} \circ L(\eta_P)\\
        L(f) \circ e_{L(P)} &= L(g) \circ e_{L(P)}\\
        L(f) &= L(g)
    \end{align*}
    and thus $f = g$, since $L$ is faithful (as $\eta$ is pointwise monic).
\end{proof}

\begin{Lemma}\label{lem: counitfact}
Let $L: \mathcal{C} \rightarrow \mathcal{D}$ and $R: \mathcal{D} \rightarrow \mathcal{C}$ be an adjunction $L \dashv R$ with unit $\eta$ and counit $\epsilon$. Suppose $\epsilon$ is pointwise epic and $\eta R$ is pointwise split epic. Then $\epsilon$ is pointwise monic.
\end{Lemma}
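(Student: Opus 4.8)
The plan is to run the proof of Lemma~\ref{lem: unitfact} through the dual categories, interchanging $L$ with $R$, the unit $\eta$ with the counit $\epsilon$, ``split monic'' with ``split epic'', and reversing every composite.

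I would first isolate the two ingredients. Since $\epsilon$ is pointwise epic, $R$ is faithful: if $R(u)=R(v)$ for parallel arrows $u,v$ of $\mathcal{D}$ then $LR(u)=LR(v)$, and naturality of $\epsilon$ gives $u\circ\epsilon_D=\epsilon_{D'}\circ LR(u)=\epsilon_{D'}\circ LR(v)=v\circ\epsilon_D$, whence $u=v$ as $\epsilon_D$ is epic. Next, the hypothesis that $\eta R$ is pointwise split epic furnishes, for each $D\in\mathcal{D}$, a morphism $s_{R(D)}\colon RLR(D)\to R(D)$ with $\eta_{R(D)}\circ s_{R(D)}=\mathrm{id}_{RLR(D)}$; in particular $s_{R(D)}$ has a left inverse and so is monic. (One may note that the triangle identity $R(\epsilon_D)\circ\eta_{R(D)}=\mathrm{id}_{R(D)}$ forces $s_{R(D)}=R(\epsilon_D)$, so the hypothesis is in fact equivalent to $\eta R$ being pointwise invertible; but only monicity of $s_{R(D)}$ is used below.)

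To prove $\epsilon$ pointwise monic, fix $D\in\mathcal{D}$, an arbitrary $X\in\mathcal{D}$, and parallel arrows $f,g\colon X\to LR(D)$ with $\epsilon_D\circ f=\epsilon_D\circ g$. Apply $R$, insert $\mathrm{id}_{RLR(D)}=\eta_{R(D)}\circ s_{R(D)}$ between $R(\epsilon_D)$ and $R(f)$ (resp.\ $R(g)$), and collapse the left-hand factors using the triangle identity $R(\epsilon_D)\circ\eta_{R(D)}=\mathrm{id}_{R(D)}$:
\begin{align*}
    R(\epsilon_D\circ f) &= R(\epsilon_D\circ g)\\
    R(\epsilon_D)\circ\eta_{R(D)}\circ s_{R(D)}\circ R(f) &= R(\epsilon_D)\circ\eta_{R(D)}\circ s_{R(D)}\circ R(g)\\
    s_{R(D)}\circ R(f) &= s_{R(D)}\circ R(g).
\end{align*}
Cancelling the monic $s_{R(D)}$ gives $R(f)=R(g)$, and then $f=g$ by faithfulness of $R$. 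Hence every $\epsilon_D$ is monic.

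I do not anticipate a genuine obstacle: the argument is purely formal, being the literal dual of Lemma~\ref{lem: unitfact}. The only point that needs care is the bookkeeping of arrow directions and sides of composition --- in particular, ``$\eta_{R(D)}$ split epic'' means $\eta_{R(D)}\circ s_{R(D)}=\mathrm{id}$ on $RLR(D)$ (not on $R(D)$), and it is this, together with the triangle identity holding on $R(D)$, that makes the inserted identity collapse correctly and makes $s_{R(D)}$ monic.
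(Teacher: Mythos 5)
Your proof is correct and is exactly the paper's argument: the paper disposes of this lemma with ``follows dually from Lemma~\ref{lem: unitfact}'', and what you have written is precisely that dual argument spelled out (faithfulness of $R$ from $\epsilon$ epic, insertion of $\eta_{R(D)}\circ s_{R(D)}=\mathrm{id}_{RLR(D)}$, collapse via the triangle identity, cancellation of the split monic $s_{R(D)}$). The directions and the side on which the splitting identity lives are handled correctly, so nothing further is needed.
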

\begin{proof}
    Follows dually from Lemma \ref{lem: unitfact}.
\end{proof}

In some examples, $\mathrm{Fix}_\eta = \Bmod$. In this case, the irreducibles in $\mathrm{Fix}_\epsilon$ can be identified with irreducible objects of $\Bmod$. The following proposition is useful in classifying these irreducibles.

\begin{Proposition}\label{prop: endsimple}
    Let $\mathcal{C}$ be a category, and $P \in \mathcal{C}$. There is a one to one correspondence:
    \begin{align*}
        \{F \in \mathrm{Irr}({}_\mathcal{C}\SMod):F(P) \neq 0\} &\leftrightarrow \mathrm{Irr}({}_{\End(P)}\SMod)\\
        F &\mapsto F(P)
    \end{align*}
\end{Proposition}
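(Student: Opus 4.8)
This is the ``idempotent'' form of the correspondence between simple modules over a linear category that are supported at an object $P$ and simple modules over the endomorphism algebra of $P$: the identity $1_P$ plays the role of an idempotent $e\in R$ and $\End(P)$ the role of $eRe$. The plan is to construct the inverse assignment $M\mapsto\overline{S}(M)$ explicitly and then check that $\mathrm{ev}_P\colon F\mapsto F(P)$ and $M\mapsto\overline{S}(M)$ are mutually inverse on isomorphism classes. Throughout I use that ${}_\mathcal{C}\SMod=[\mathcal{C},\SVec]$ is an abelian category in which kernels, cokernels, images, sums of subobjects and (co)limits are computed pointwise (as in Remark~\ref{rem:pointwise}); in particular $\mathrm{ev}_P$ is exact, and among the subfunctors of a given functor that vanish at $P$ there is a largest one.

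Write $E=\End_\mathcal{C}(P)$; for $F\in{}_\mathcal{C}\SMod$, $F(P)$ is a left $E$-module via $a\cdot v=F(a)(v)$. The structural input I would record first is that $\mathrm{ev}_P$ has a left adjoint
\[
S=\Hom_\mathcal{C}(P,-)\otimes_E(-)\colon{}_E\SMod\longrightarrow{}_\mathcal{C}\SMod,\qquad S(M)\colon X\longmapsto\Hom_\mathcal{C}(P,X)\otimes_E M,
\]
the adjunction isomorphism $\mathrm{Nat}(S(M),F)\cong\Hom_E(M,F(P))$ being proved by the same tensor--hom/Yoneda computation as in Theorem~\ref{thm:adj} (it is the analogue of that result for the one-object subcategory of $\mathcal{C}$ on $P$). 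From this I need only the following: the unit $\eta_M\colon M\to S(M)(P)=\Hom_\mathcal{C}(P,P)\otimes_E M=E\otimes_E M$ is an isomorphism, whence $\mathrm{ev}_P\circ S\cong\Id$ and, by a triangle identity, $\mathrm{ev}_P(\epsilon_F)$ is an isomorphism for every $F$; and $S(M)$ is \emph{generated by its value at $P$} (every elementary tensor $g\otimes m\in S(M)(X)$ equals $S(M)(g)(1_P\otimes m)$), a property inherited by quotient functors of $S(M)$.

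Now three steps. \emph{(i)} If $F$ is simple with $F(P)\neq0$, then $F(P)$ is a simple $E$-module: for a nonzero $E$-submodule $W\subseteq F(P)$, the rule $X\mapsto\sum_{f\in\Hom_\mathcal{C}(P,X)}F(f)(W)$ is a nonzero subfunctor of $F$, hence all of $F$, and evaluating at $P$ (where the sum collapses to $W$, as $W$ is $E$-stable) gives $W=F(P)$. \emph{(ii)} Given a simple $E$-module $M$, let $G\subseteq S(M)$ be the largest subfunctor with $G(P)=0$ and set $\overline{S}(M):=S(M)/G$; then $\overline{S}(M)(P)=S(M)(P)=M\neq0$, and $\overline{S}(M)$ is simple --- a nonzero subfunctor either has nonzero value at $P$, forcing it to be everything by $E$-simplicity of $M$ together with the generation property, or vanishes at $P$, in which case its preimage in $S(M)$ lies in $G$ and it is zero. \emph{(iii)} If $F$ is simple with $F(P)\neq0$, put $M=F(P)$ (simple by \emph{(i)}); the counit $\epsilon_F\colon S(M)\to F$ is nonzero (since $\mathrm{ev}_P(\epsilon_F)$ is an isomorphism and $M\neq0$), hence epic onto the simple functor $F$, and $\ker\epsilon_F$ vanishes at $P$ by exactness of $\mathrm{ev}_P$, so $\ker\epsilon_F\subseteq G$; the induced surjection $F\cong S(M)/\ker\epsilon_F\twoheadrightarrow S(M)/G=\overline{S}(M)$ is the identity on values at $P$, hence nonzero, and a nonzero map out of the simple functor $F$ is monic, so it is an isomorphism. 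Thus every simple $F$ with $F(P)\neq0$ satisfies $F\cong\overline{S}(F(P))$, which together with \emph{(i)} and \emph{(ii)} shows $F\mapsto F(P)$ and $M\mapsto\overline{S}(M)$ are mutually inverse on isomorphism classes.

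The only part I expect to require real care is the bookkeeping of the $(\mathcal{C},E)$-bimodule $\Hom_\mathcal{C}(P,-)$ --- checking that the various left $E$-actions match up, that $\eta_M$ is genuinely an isomorphism, and that $S(M)$ and its quotients are generated by their value at $P$. Once that is in hand, steps \emph{(i)}--\emph{(iii)} are routine simple-object manipulations in the abelian category ${}_\mathcal{C}\SMod$ and present no obstacle.
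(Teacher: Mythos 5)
Your proposal is correct. Note, however, that the paper does not actually prove Proposition \ref{prop: endsimple}: its ``proof'' is a citation to \cite[Thm.~18]{DFO1994}, so what you have written is a self-contained replacement for that reference rather than a variant of an argument in the paper. Your route is the standard one underlying such results (the categorical analogue of the idempotent correspondence between simple $R$-modules with $eM\neq 0$ and simple $eRe$-modules): evaluation at $P$ has the left adjoint $S=\Hom_{\mathcal{C}}(P,-)\otimes_E(-)$, one passes to the quotient $\overline{S}(M)=S(M)/G$ by the largest subfunctor $G$ vanishing at $P$, and then (i)--(iii) are exactly the right checks. The key points you flag are indeed the ones that need care and you handle them correctly: $G$ exists because sums of subfunctors are computed pointwise (Remark \ref{rem:pointwise}); $S(M)$ and its quotients are generated by their value at $P$ since $g\otimes m=S(M)(g)(1_P\otimes m)$; the unit $E\otimes_E M\cong M$ plus a triangle identity gives that $\mathrm{ev}_P(\epsilon_F)$ is an isomorphism, which drives step (iii); and in step (i) the subfunctor $X\mapsto\sum_f F(f)(W)$ evaluates back to $W$ at $P$ precisely because $W$ is $E$-stable. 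Compared with simply citing \cite{DFO1994}, your argument has the advantage of making explicit the adjunction-theoretic mechanism, which fits naturally with the framework of Theorem \ref{thm:adj} used throughout the paper; the only caveat is the implicit one you already note, that the bijection is between isomorphism classes and that the $E$-module structure on $F(P)$ (via $a\cdot v=F(a)(v)$, with $\Hom_{\mathcal{C}}(P,X)$ a right $E$-module by precomposition) is the one intended in the statement.
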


\begin{proof}
    See, for example, \cite[Thm.~18]{DFO1994}.
\end{proof}

\section{Examples}
\label{sec:examples}

\subsection{Single Object Case}
    If $\mathcal{B}$ is any full subcategory of ${}_A\SMod$ consisting of a single object $B$, then $\Bmod$ can be identified with ${}_S\SMod$ where $S = \End_A(B)^{\rm op}$. Under this identification, $Y=\Hom_A(B,-)$ and $|\cdot| = B \otimes_S -$.
\begin{Example}
    Suppose $\mathcal{B}=\{A\}$. Then $A \cong S$ and $|\cdot| \dashv Y$ gives the corresponding equivalence of module categories. That is, $\Fix_\epsilon = {}_A\SMod$ and $\Fix_\eta = {}_S\SMod$.
\end{Example}

\begin{Example}
    Suppose $\mathcal{B} = \{A^n\}$ for some $n \geq 1$. Then, $\Fix_\epsilon = {}_A\SMod$ and $\Fix_\eta = {}_S\SMod$.
\end{Example}

\begin{Example}
    More generally, suppose $\mathcal{B}=\{P\}$ where $P$ is a finitely generated projective generator. Then, $\Fix_\epsilon = {}_A\SMod$ and $\Fix_\eta = {}_S\SMod$. In fact, if $A_1, A_2$ are Morita equivalent $k$-algebras, with \[
    \begin{tikzcd}
        {}_{A_1}\SMod \arrow[r, shift left=.75ex, "R"] & {}_{A_2}\SMod \arrow[l, shift left=.75ex, "L"]
    \end{tikzcd}
    \]
    then there exists a finitely generated projective generator $P$ of ${}_A\SMod$ so that $A_2 \cong S$. Furthermore, under this identification, $R \cong Y$ and $L \cong |\cdot|$.
\end{Example}

For the rest of this section, let $I$ be a left ideal of an algebra $A$, and let $\mathcal{B}$ be the full subcategory of ${}_A\SMod$ with the only object $A/I$. This setting covers the examples from Section \ref{sec:intro-single-object}. We put $N=N_A(I)$, $S=N/I=(A/I)^I=\{a+I\mid a\in A,\, Ia\subset I\}$. For a left $A$-module $V$, let $V^I=\{v\in V\mid I\cdot v=0\}$.

\begin{Proposition}\label{prp:single-epsilon-free}\phantom{X}
    \begin{enumerate}[{\rm (i)}]
    \item For a left $A$-module $V$, the map $\ep_V$ is surjective if and only if $V$ is generated, as a left $A$-module, by the subspace $V^I$ of $I$-invariants.
    \item Suppose $A/I$ is free as a right $S$-module, and let $W\subset A$ be a subspace that provides a right $S$-module isomorphism $W\otimes_k S\to A/I$, $w\otimes s\mapsto (w+I)\cdot s$. Then
    \begin{equation}
        \Fix_\ep = \{V\in {}_A\SMod: \text{the map } W\otimes_k V^I\to V,\, w\otimes v\mapsto w\cdot v \text{ is bijective}\}.
    \end{equation}
    \end{enumerate}
\end{Proposition}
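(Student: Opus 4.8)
The plan is to unwind the definitions of the counit $\ep_V$ in the single-object setting and then read off both statements directly. Since $\mathcal{B}=\{A/I\}$ and $S=\End_A(A/I)^{\rm op}$, we have $Y(V)=\Hom_A(A/I,V)\cong V^I$ as a left $S$-module (the isomorphism sends $\phi$ to $\phi(1+I)$), and $|F|=(A/I)\otimes_S F$ for $F\in{}_S\SMod$. Therefore the composite $|Y(V)|$ is naturally identified with $(A/I)\otimes_S V^I$, and under these identifications the counit component $\ep_V\colon (A/I)\otimes_S V^I\to V$ is the $A$-linear map sending $(a+I)\otimes v$ to $a\cdot v$ (this is well defined: if $a\in I$ then $a\cdot v=0$ since $v\in V^I$, and it is $S$-balanced by the definition of the $S$-action). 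First I would record this explicit formula, as everything else follows from it.

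For part (i): the image of $\ep_V$ is exactly $(A/I)\cdot V^I = A\cdot V^I$, the $A$-submodule of $V$ generated by $V^I$. Hence $\ep_V$ is surjective if and only if $V$ is generated as a left $A$-module by $V^I$. That is immediate once the formula for $\ep_V$ is in hand.

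For part (ii): by part (i), $\Fix_\ep$ consists of those $V$ for which $\ep_V$ is both surjective and injective. Now suppose $A/I\cong W\otimes_k S$ as right $S$-modules via $w\otimes s\mapsto (w+I)\cdot s$. Tensoring over $S$ with $V^I$ gives a chain of natural isomorphisms
\[
(A/I)\otimes_S V^I \;\cong\; (W\otimes_k S)\otimes_S V^I \;\cong\; W\otimes_k V^I,
\]
and chasing an element $w\otimes v$ through this chain and then through $\ep_V$ shows that $\ep_V$ is carried to the map $W\otimes_k V^I\to V$, $w\otimes v\mapsto w\cdot v$. Thus $\ep_V$ is an isomorphism precisely when this latter map is bijective, which is the claimed description of $\Fix_\ep$.

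The only genuinely delicate point — the ``main obstacle,'' such as it is — is verifying that the abstract counit of the adjunction $|\cdot|\dashv Y$ from Theorem~\ref{thm:adj}, when transported along the identifications $Y(V)\cong V^I$ and $|F|\cong (A/I)\otimes_S F$, really is the concrete multiplication map described above rather than that map composed with some automorphism. This is a matter of tracing the unit/counit through the string of natural isomorphisms in the proof of Theorem~\ref{thm:adj} (or, more cleanly, observing that for a representable-type adjunction the counit at $V$ is the unique map whose adjunct under $\Hom_A(|Y(V)|,V)\cong\mathrm{Nat}(Y(V),Y(V))$ is the identity, and checking that multiplication has this property). I would dispatch this with the tensor–hom computation already used to prove Theorem~\ref{thm:adj}, specialized to $\mathcal{B}=\{A/I\}$, and then the two parts drop out. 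The compatibility of the $S$-balancing in step two with the right $S$-module structure on $W\otimes_k S$ is routine.
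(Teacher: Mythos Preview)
Your proposal is correct and follows essentially the same approach as the paper: both identify $\ep_V$ with the evaluation/multiplication map $(A/I)\otimes_S V^I\to V$, read off part (i) from its image being $AV^I$, and for part (ii) use the freeness hypothesis to rewrite the domain as $W\otimes_k V^I$. Your discussion of why the abstract counit agrees with this concrete map is more explicit than the paper's, which simply asserts the identification.
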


\begin{proof}
    (i) For a left $A$-module $V$, the map $\ep_V$ is the evaluation map
    \begin{equation}\label{eq:pf-single-epsilon}
        \ep_V:(A/I)\otimes_S\Hom_A(A/I,V)\to V,\qquad x\otimes\phi\mapsto \phi(x).
    \end{equation}
    The image of $\ep_V$ is $AV^I$, which proves the claim.
    
    (ii) We have $(A/I)\otimes_S \Hom_A(A/I,V)\cong W\otimes_k (V^I)$ and under this identification, $\ep_V$ sends $w\otimes v$ to $w\cdot v$ for $v\in V^I$.
\end{proof}

\begin{Proposition} \label{prop: singleeta}
    Suppose $(A/I)^I$ has a right $S$-module complement in $A/I$. Then $\eta$ is monic. Furthermore, 
    \begin{equation}\label{eq:complement-Fix-eta}
    \Fix_\eta = \{F \in {}_S\SMod: \Hom_A(A/I, A/I \otimes_k F) \twoheadrightarrow \Hom_A(A/I, A/I \otimes_S F)\}.
    \end{equation}
\end{Proposition}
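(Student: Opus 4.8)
The plan is to make the unit $\eta$ completely explicit in this single-object situation and then reduce both assertions to linear algebra over $S$. Recall that here $\Bmod$ is identified with ${}_S\SMod$, with $|\cdot|=(A/I)\otimes_S-$ and $Y=\Hom_A(A/I,-)$, and that under the canonical natural isomorphism $\Hom_A(A/I,M)\cong M^{I}$, $\phi\mapsto\phi(1+I)$, the unit becomes
\[
\eta_F\colon F\longrightarrow \bigl((A/I)\otimes_S F\bigr)^{I},\qquad f\longmapsto (1+I)\otimes_S f .
\]
The two facts about $S$ that I would use throughout are that $(A/I)^I=S$ and that, viewed inside $A/I$, it is free of rank one as a right $S$-module on the generator $1+I$ (since $(1+I)\cdot s=s$ for $s\in S$).

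First I would establish that $\eta$ is monic. By hypothesis the inclusion $(A/I)^I\hookrightarrow A/I$ admits a retraction of right $S$-modules, so for every $F$ the induced map $(A/I)^I\otimes_S F\to (A/I)\otimes_S F$ is split injective, with image $(1+I)\otimes_S F$. Composing the isomorphism $F\cong S\otimes_S F\cong (A/I)^I\otimes_S F$ with this split injection yields exactly $\eta_F$, so each $\eta_F$ is injective; since $\Bmod={}_S\SMod$ is abelian and monicity of a natural transformation is a pointwise property (Remark~\ref{rem:pointwise}), $\eta$ is monic. In particular $\eta_F$ is an isomorphism if and only if it is an epimorphism, i.e. surjective, so $\Fix_\eta=\{F\in{}_S\SMod:\eta_F\text{ is surjective}\}$.

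It then remains to recognize the surjectivity of $\eta_F$ as the stated condition. Let $\pi_F\colon (A/I)\otimes_k F\twoheadrightarrow (A/I)\otimes_S F$ be the canonical projection of left $A$-modules and $\alpha_F:=\Hom_A(A/I,\pi_F)$ the induced map; this $\alpha_F$ is precisely the map implicit in \eqref{eq:complement-Fix-eta}, and under $\Hom_A(A/I,-)\cong(-)^I$ it is simply the restriction of $\pi_F$ to $I$-invariants. I claim that $\operatorname{im}\alpha_F=\operatorname{im}\eta_F$, with no hypothesis needed. The inclusion $\operatorname{im}\eta_F\subseteq\operatorname{im}\alpha_F$ is immediate from the factorization $\eta_F=\alpha_F\circ\tilde\eta_F$, where $\tilde\eta_F\colon F\to \Hom_A(A/I,(A/I)\otimes_k F)$ sends $f$ to the map $x\mapsto x\otimes_k f$ (well defined because $I\cdot(1+I)=0$ in $A/I$). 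For the reverse inclusion, choose a $k$-basis $(f_j)_j$ of $F$; then $(A/I)\otimes_k F=\bigoplus_j(A/I)\otimes f_j$ as left $A$-modules, so $\bigl((A/I)\otimes_k F\bigr)^{I}=\bigoplus_j (A/I)^I\otimes f_j=(A/I)^I\otimes_k F$, and applying $\pi_F$ and using $(A/I)^I=(1+I)S$ together with the $S$-balancing in $\otimes_S$ shows that every element of $\pi_F\bigl((A/I)^I\otimes_k F\bigr)$ has the form $(1+I)\otimes_S g$ with $g\in F$, hence lies in $\operatorname{im}\eta_F$. Therefore $\eta_F$ is surjective if and only if $\alpha_F$ is surjective, which is exactly \eqref{eq:complement-Fix-eta}.

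The one genuinely delicate point is the monicity of $\eta$: because $(A/I)\otimes_S-$ is only right exact, without the splitting hypothesis the map $(A/I)^I\otimes_S F\to(A/I)\otimes_S F$ — equivalently $\eta_F$ — may fail to be injective, and the right $S$-module complement is used precisely, and only, to rule this out. The identity $\operatorname{im}\eta_F=\operatorname{im}\alpha_F$, and hence the description of $\Fix_\eta$, is then formal: it needs only a choice of $k$-basis of $F$ to compute the $I$-invariants of $(A/I)\otimes_k F$, together with the rank-one freeness of $(A/I)^I$ over $S$. I expect no obstacle beyond the routine verifications of naturality and of the well-definedness of $\pi_F$ and $\tilde\eta_F$.
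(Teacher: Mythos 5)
Your proof is correct and follows essentially the same route as the paper: you use the right $S$-module retraction onto $(A/I)^I$ to split $\eta_F$ (the paper writes the explicit left inverse $n_F=(p\otimes 1)\circ\mathrm{ev}_{1+I}$, you tensor the split inclusion with $F$ — the same ingredient), and then identify surjectivity of $\eta_F$ with surjectivity of $\Hom_A(A/I,\pi_F)$ via $\Hom_A(A/I,A/I\otimes_k F)\cong S\otimes_k F$ and the surjection $S\otimes_k F\twoheadrightarrow F$, which is exactly the paper's commutative square made element-wise. No gaps; the minor imprecision of conflating $\eta_F$ with its composite into $A/I\otimes_S F$ is harmless for injectivity.
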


\begin{proof}
    Let $p$ denote the projection of $A/I$ onto $S \cong (A/I)^I$. Given $F \in {}_S\SMod$, consider the map of vector spaces: $$n_F: \Hom_A(A/I, A/I \otimes_S F) \rightarrow A/I \otimes_S F \xrightarrow{p \otimes 1} S \otimes_S F \cong F.$$ This map provides a left inverse to $\eta_F$, demonstrating that $\eta$ is monic, in view of Remark \ref{rem:pointwise}.
    Consequently, $\Fix_\eta = \{F \in {}_S\SMod: \eta_F \text{ is epic}\}$. Now \eqref{eq:complement-Fix-eta} follows from the commutative diagram:
    \[\begin{tikzcd}
        F \arrow[r] & \Hom_A(A/I, A/I \otimes_S F) \\
        S \otimes_k F \arrow[u, twoheadrightarrow] \arrow[r, "\cong"] & \Hom_A(A/I, A/I \otimes_k F) \arrow[u]
    \end{tikzcd}\]
\end{proof}

\begin{Example}
    In the quantum Hamiltonian reduction case (Section \ref{sec:quantum-hamiltonian-reduction}), a vanishing of cohomology that goes back to Kostant \cite{Kos1978} in the Whittaker case is sufficient to obtain an equivalence of categories \cite[Prop.~5.1]{Tik2012}. In particular, $H^1(\Fm_\chi,U/U\Fm_\chi)=0$ implies that the inclusion $(U/U\Fm_\chi)^{\Fm_\chi}\to U/U\Fm_\chi$ splits. This coincides with the assumption of Proposition \ref{prop: singleeta}.
\end{Example}

\begin{Example}
    In the case of reduction algebras (Section \ref{sec:intro-reduction-algebras}) associated to a homomorphism $U(\Fg)\to A$, a common condition \cite{KhoOgi2008} is that for every positive root $\be$ and integer $m$, the image of the shifted coroot $h_\be+m$ is invertible in $A$, and that $\Fn_+$ acts locally nilpotently on $A$ under the adjoint action. This ensures that the extremal projector \cite{Tol2011} for $\Fg$ provides a well-defined surjective map $A/I\to S$ of right $S$-modules. The kernel of this map, which equals $\{f+I\mid f\in \Fn_- A\}$, is the required right $S$-module complement to $S$ in $A/I$. Thus, the conclusions of Proposition \ref{prop: singleeta} hold in this situation.
    Assume further that $A$ is free as a left $U(\Fg)$-module.\footnote{This holds for example if $A$ is (a localization of) an enveloping algebra of some Lie algebra containing $\Fg$, or if $\phi:U(\Fg)\to B$ is some algebra map and $A$ is (a localization of) $U(\Fg)\otimes B$ with $U(\Fg)\to A$ given by $u\mapsto (1\otimes\phi)\circ \Delta(u)$ where $\Delta$ is the comultiplication.}
    Let $M=A/I$. Then we claim that
    \begin{equation}\label{eq:reduction-algebra-example}
    U(\mathfrak{n}_-)\otimes_\C S \to M,\qquad u\otimes (n+I)\mapsto (un)+I,\quad\forall u\in U(\Fn_-), n\in N
    \end{equation}
    is an isomorphism of $(U(\mathfrak{n}_-),S)$-bimodules. In particular, $M$ is free as a right $S$-module.
    To prove this, let $m\in M$ be arbitrary. Write $m=a+I$ for some $a\in A$. Consider $n+I=P(m)$ where $P=P_\Fg$ is the extremal projector for $\Fg$. Then $a-n\in \Fn_- A$ so can be written as $a-n=\sum y_i a_i$, where $y_i\in\Fn_-$. Each $a_i$ has strictly higher weight than $a$. So by induction, each $a_i$ is in the image of \eqref{eq:reduction-algebra-example}. Therefore $a$ is also in the image. This shows that the map is surjective. That the map is injective follows from the fact that $U(\Fn_-)$ acts injectively on $M$. That is, $\ann_{U(\Fn_-)} m=0$ for all nonzero $m\in M$.

    Thus, by Proposition \ref{prp:single-epsilon-free}, we have a precise description of the subcategory $\Fix_\ep$ of ${}_A\SMod$. It is the category of left $A$-modules $V$ which (i) are generated by its subspace $V^+$ of $\Fn_+$-highest weight vectors, and (ii)  $U(\Fn_-)$ acts injectively on $V^+$. Equivalently, the map $U(\Fn^-)\otimes V^+\to V$, $u\otimes v\mapsto u\cdot v$ is an isomorphism of left $U(\Fn_-)$-modules.
\end{Example}

We end this subsection with some consequences when $A/I$ is a simple $A$-module.

\begin{Proposition} \label{prop: singleepsilon}
    If $A/I$ is simple as a left $A$-module, then $\epsilon$ is monic.
\end{Proposition}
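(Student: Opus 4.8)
The plan is to combine Schur's lemma with the description of the counit as an evaluation map, reducing the statement to the behaviour of $\epsilon$ on $P$-isotypic semisimple modules, where $P:=A/I$. Recall from the proof of Proposition~\ref{prp:single-epsilon-free}(i) that, under the identification $\Bmod\cong{}_S\SMod$ with $S=\End_A(P)^{\rm op}$, the component $\epsilon_V$ is the evaluation map $\epsilon_V\colon P\otimes_S\Hom_A(P,V)\to V$, $p\otimes\phi\mapsto\phi(p)$. Since $A/I$ is simple, Schur's lemma shows $\End_A(P)$, and hence $S$, is a division ring. Therefore $\Hom_A(P,V)$ is a free left $S$-module, and so $M:=P\otimes_S\Hom_A(P,V)$ is isomorphic to a direct sum of copies of $P$; in particular $M$ is semisimple and every simple submodule of $M$ is isomorphic to $P$.

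The key step is to prove that $\Hom_A(P,\epsilon_V)\colon\Hom_A(P,M)\to\Hom_A(P,V)$ is injective. Suppose $\psi\colon P\to M$ satisfies $\epsilon_V\circ\psi=0$. Fix an $S$-basis $\{\phi_i\}_{i\in J}$ of $\Hom_A(P,V)$; this gives $M=\bigoplus_{i\in J}P_i$ with each $P_i$ identified with $P$ via $p\mapsto p\otimes\phi_i$, and under this identification $\epsilon_V$ restricts to $\phi_i$ on $P_i$. As the image of $\psi$ is a quotient of the cyclic module $P=A/I$, it is contained in a finite sub-sum $\bigoplus_{i\in F}P_i$, so $\psi=(\bar s_i)_{i\in F}$ with $s_i\in S$, viewing each component as an endomorphism of $P$. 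A short computation using that the left $S$-action on $\Hom_A(P,V)$ is precomposition gives $\epsilon_V\circ\psi=\sum_{i\in F}s_i\cdot\phi_i$, so linear independence of the $\phi_i$ forces all $s_i=0$ and $\psi=0$. (One may note in addition that the triangle identity $\Hom_A(P,\epsilon_V)\circ\eta_{\Hom_A(P,V)}=\mathrm{id}$ exhibits $\Hom_A(P,\epsilon_V)$ as a split epimorphism, so it is in fact an isomorphism.)

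To finish, suppose $\ker\epsilon_V\neq 0$. Being a nonzero submodule of $M$, it is semisimple with all simple submodules isomorphic to $P$, so there is a nonzero map $\iota\colon P\to\ker\epsilon_V\subseteq M$. Then $\epsilon_V\circ\iota=0$, i.e.\ $\Hom_A(P,\epsilon_V)(\iota)=0$, contradicting the injectivity just established. Hence $\ker\epsilon_V=0$ for every left $A$-module $V$, which says precisely that $\epsilon$ is pointwise monic, hence monic in $[{}_A\SMod,{}_A\SMod]$ by Remark~\ref{rem:pointwise}.

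The crux is the middle step. It is tempting to argue coordinatewise: each $\phi_i$ is injective because $P$ is simple, and any two of the submodules $\phi_i(P)$ meet trivially (otherwise an invertible element of $S$ relates $\phi_i$ and $\phi_j$, contradicting linear independence). But pairwise independence of the family $\{\phi_i(P)\}$ does not imply joint independence, which is exactly what injectivity of $\epsilon_V$ requires; the detour through $\Hom_A(P,-)$ of the source — equivalently, the identification of the image of $\epsilon_V$ with the $P$-isotypic component of $V$ — is what supplies it. The only other point needing care is the left/right $S$-module bookkeeping in the identification $\Bmod\cong{}_S\SMod$.
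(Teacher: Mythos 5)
Your proof is correct, but it takes a genuinely different route from the paper's. The paper bases $A/I$ as a \emph{right} $S$-module (possible since $S$ is a division ring by Schur), writes an arbitrary element of the kernel as $\sum_i b_i\otimes\varphi_i$, and then invokes the Jacobson Density Theorem to produce elements $a_j\in A$ acting as coordinate projections on the basis $\{b_i\}$; applying them shows each $\varphi_j$ kills the cyclic generator $1+I$ and hence vanishes. You instead base $\Hom_A(P,V)$ as a \emph{left} $S$-module, so that the realization $P\otimes_S\Hom_A(P,V)$ becomes an explicit $P$-isotypic semisimple module $\bigoplus_i P_i$ with $\epsilon_V$ acting as $\phi_i$ on the $i$-th summand; you then reduce injectivity of $\epsilon_V$ to injectivity of $\Hom_A(P,\epsilon_V)$ (legitimate precisely because every nonzero submodule of the $P$-isotypic source contains a copy of $P$) and verify the latter by an $S$-linear-independence computation. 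Your argument avoids the density theorem entirely, using only Schur's lemma and standard facts about semisimple modules, and it is somewhat more conceptual: it effectively identifies $\epsilon_V$ with the inclusion of the $P$-isotypic component (the trace of $P$ in $V$), and as your parenthetical notes, combined with the triangle identity it even shows $Y(\epsilon_V)$ is an isomorphism, foreshadowing Corollary \ref{cor:simple-case}. The paper's proof is shorter and purely element-wise, at the price of citing Jacobson density; the bookkeeping points you flag (left/right $S$-actions, finiteness of the support of a cyclic image, and the distinction between pairwise and joint independence of the $\phi_i(P)$) are exactly the right ones and are handled correctly.
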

\begin{proof}
     Since $A/I$ is simple, $S$ is a division algebra, and $A/I$ is a free right $S$-module. Let $\{b_i\}$ be an $S$-basis for $A/I$. Let $V \in {}_A\SMod$, and $\sum_i b_i \otimes \varphi_i \in A/I \otimes_S \Hom_A(A/I, V)$ so that $\epsilon_V(\sum_i b_i \otimes \varphi_i) = 0$. By the Jacobson Density Theorem, for each $i$, there exist $a_j \in A$ so that $a_j.b_i = \delta_{i,j} + I$ for each $j$. Thus for each $j$, we have $0 = a_j.\epsilon_V(\sum_i b_i \otimes \varphi_i) = \epsilon_V(1 + I \otimes \varphi_j) = \varphi_j(1 + I)$. So $\varphi_j = 0$ for each $j$, and thus $\sum_i b_i \otimes \varphi_i = 0$. So $\epsilon$ is monic.
\end{proof}

\begin{Corollary}\label{cor:simple-case}
    If $A/I$ is a simple $A$-module, the following statements hold.
    \begin{enumerate}[{\rm (i)}]
        \item $\Fix_\epsilon = \{V \in {}_A\SMod: A.V^I = V\}$.
        \item $\Fix_\eta = {}_S\SMod$.
    \end{enumerate}
\end{Corollary}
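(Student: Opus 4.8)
The plan is to combine the two single-object propositions already at hand. By Proposition~\ref{prop: singleepsilon}, simplicity of $A/I$ makes $\ep$ pointwise monic, while Proposition~\ref{prp:single-epsilon-free}(i) says $\ep_V$ is surjective exactly when $A.V^I=V$. Since ${}_A\SMod$ is abelian, a morphism that is both monic and epic is an isomorphism (Remark~\ref{rem:pointwise}); hence $\ep_V$ is an isomorphism if and only if $A.V^I=V$, which is the asserted description of $\Fix_\ep$.

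\textbf{Part (ii).} First I would verify the hypothesis of Proposition~\ref{prop: singleeta}. By Schur's lemma $S=\End_A(A/I)^{\rm op}$ is a division algebra, and $(A/I)^I=S$ sits inside the right $S$-module $A/I$ as the rank-one free submodule generated by $1+I$; over a division algebra such a submodule is a direct summand, so it admits a right $S$-module complement in $A/I$. Proposition~\ref{prop: singleeta} then gives that $\eta$ is pointwise monic and that $\Fix_\eta=\{F\in{}_S\SMod:\eta_F\text{ is epic}\}$, so it remains to show $\eta_F$ is epic for every $F$.

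For this I would invoke the triangle identity $\ep_{|F|}\circ|\eta_F|=\Id_{|F|}$, which exhibits $\ep_{|F|}$ as a split epimorphism; combined with Proposition~\ref{prop: singleepsilon} (that $\ep$ is pointwise monic) it follows that $\ep_{|F|}$ is in fact an isomorphism, in particular pointwise split monic. Lemma~\ref{lem: unitfact}, applied to $L=|\cdot|$ and $R=Y$, then yields that $\eta$ is pointwise epic, hence pointwise an isomorphism, hence an isomorphism by Remark~\ref{rem:pointwise}; thus $\Fix_\eta={}_S\SMod$. Alternatively one can argue directly: since $S$ is a division algebra every $F\in{}_S\SMod$ is free, $|\cdot|$ preserves coproducts as a left adjoint, and $Y=(-)^I$ also preserves coproducts because elements of $(\bigoplus_i V_i)^I$ have finite support, so the claim reduces to $F=S$, where $\eta_S$ is identified with the map $S\to\End_A(A/I)$ given by right multiplication, which is visibly bijective.

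I do not anticipate a genuine obstacle: the corollary is essentially bookkeeping layered on Propositions~\ref{prp:single-epsilon-free}, \ref{prop: singleeta}, and \ref{prop: singleepsilon}. The only point needing a moment's care is the existence of the right $S$-module complement demanded by Proposition~\ref{prop: singleeta}, and this is immediate once one recalls that $(A/I)^I=S$ and that $S$ is a division algebra.
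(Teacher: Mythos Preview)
Your proposal is correct and follows essentially the same route as the paper: Proposition~\ref{prop: singleepsilon} for $\ep$ monic plus Proposition~\ref{prp:single-epsilon-free}(i) for part~(i); Schur's lemma to obtain the complement required by Proposition~\ref{prop: singleeta}, then the triangle identity together with Lemma~\ref{lem: unitfact} for part~(ii). The only cosmetic difference is that the paper phrases the final step as restricting the adjunction to $\Fix_\ep\leftrightarrow{}_S\SMod$ before invoking Lemma~\ref{lem: unitfact}, whereas you verify directly that $\ep_{|F|}$ is an isomorphism (and hence split monic); your direct alternative via freeness of $S$-modules and coproduct preservation is a pleasant bonus not in the paper.
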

\begin{proof}
    By Proposition \ref{prop: singleepsilon}, $\epsilon$ is monic. Therefore, $\Fix_\epsilon = \{V \in {}_A\SMod: \epsilon_V \text{ is epic}\} = \{V \in {}_A\SMod: A.V^I = V\}$.

    Since $A/I$ is simple, $S$ is a division algebra by Schur's Lemma, so $(A/I)^I \cong S$ has a right $S$-module complement in $A/I$ (simply extend a basis). By Proposition \ref{prop: singleeta}, $\eta$ is monic.
    Now the triangle identity $(\epsilon|\cdot|) \circ (|\cdot|\eta) = 1$ shows that $\epsilon_{|F|}$ is an epimorphism for every $F \in {}_S\SMod$, so that $|\cdot|$ maps ${}_S\SMod$ into $\Fix_\epsilon$. So $|\cdot| \dashv Y$ restricts to an adjunction:
     \[
    \begin{tikzcd}
        \Fix_\epsilon \arrow[r, shift left=.75ex, "Y"] & {}_S\SMod \arrow[l, shift left=.75ex, "|\cdot|"]
    \end{tikzcd}
    \]
    By Lemma \ref{lem: unitfact}, $\eta$ is an isomorphism. Thus $\Fix_\eta = {}_S\SMod$.
\end{proof}

\subsection{Many Objects; Weight Modules}
Let $X$ be a set of left ideals of an algebra $A$ so that $I + J = A$ for distinct $I,J \in X$. Then $\bigoplus_{I \in X} \Hom_A(A/I,V) \rightarrow V$ is an injective linear map for each $V \in {}_A\SMod$ (that is, the sum of weight spaces of $V$ is direct). We call $V \in {}_A\SMod$ a \emph{weight module}, if $\bigoplus_{I \in X} \Hom_A(A/I,V) \rightarrow V$ is an isomorphism. We denote by $\mathsf{Wt}(A, X)$ the full subcategory of ${}_A\SMod$ consisting of weight modules.

\begin{Lemma}\label{lem: weightcategory}
    $\mathsf{Wt}(A, X)$ is closed under taking submodules and quotients. Thus $V \in \mathsf{Wt}(A,X)$ is simple if and only if it is simple as an $A$-module. Furthermore, given $I \in X$, if $A/I$ is a weight module, then any simple $A$-module $V$ with $V^I \neq 0$ is a weight module.
\end{Lemma}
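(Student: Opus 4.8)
My plan is to reduce the last two assertions to the first, prove closure under quotients by an elementary argument, and then bootstrap closure under submodules out of it. Throughout I identify $\Hom_A(A/I,V)$ with $V^I=\{v\in V\mid Iv=0\}$ via evaluation at $1+I$, so that ``$V$ is a weight module'' means exactly $V=\bigoplus_{I\in X}V^I$ as an internal direct sum; directness of the summands is precisely the injectivity statement recorded just before the lemma, which I use as a black box.

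For the reductions: granting that $\mathsf{Wt}(A,X)$ is a full subcategory of ${}_A\SMod$ closed under submodules, the sub-objects of a fixed $V\in\mathsf{Wt}(A,X)$ inside $\mathsf{Wt}(A,X)$ are exactly the $A$-submodules of $V$, so $V$ is simple in $\mathsf{Wt}(A,X)$ if and only if it is simple as an $A$-module. And if $I\in X$ has $A/I\in\mathsf{Wt}(A,X)$ and $V$ is a simple $A$-module with $V^I\ne 0$, then picking $0\ne v\in V^I$, the map $A/I\to V$, $a+I\mapsto av$, is well defined (since $Iv=0$) and surjective (since $Av$ is a nonzero submodule of the simple module $V$), exhibiting $V$ as a quotient of the weight module $A/I$; closure under quotients then gives $V\in\mathsf{Wt}(A,X)$.

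Closure under quotients is the easy half: let $V\in\mathsf{Wt}(A,X)$ and $q\colon V\twoheadrightarrow V/W$. For every $I$ one has $q(V^I)\subseteq(V/W)^I$, since $Iv=0$ forces $I\,q(v)=q(Iv)=0$; as $V=\sum_I V^I$ and $q$ is onto, $V/W=\sum_I q(V^I)\subseteq\sum_I(V/W)^I$, hence $V/W=\sum_I(V/W)^I$, and combining with injectivity of $\bigoplus_I(V/W)^I\to V/W$ yields $V/W=\bigoplus_I(V/W)^I$. Now for submodules, let $W\subseteq V$ with $V\in\mathsf{Wt}(A,X)$ and put $W^I=W\cap V^I$. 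The sum $\sum_I W^I$ is automatically direct, sitting inside $\bigoplus_I V^I$, so it suffices to show $W=\sum_I W^I$. Given $w\in W$, write $w=\sum_I w_I$ with $w_I\in V^I$ (finitely many nonzero). Applying $q\colon V\to V/W$ gives $0=q(w)=\sum_I q(w_I)$ with $q(w_I)\in(V/W)^I$; since $V/W$ is a weight module, the subspaces $(V/W)^I$ are independent, so $q(w_I)=0$, i.e. $w_I\in W\cap V^I=W^I$, for all $I$. Hence $w\in\sum_I W^I$, as needed.

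The main difficulty is conceptual, namely choosing the right order of operations rather than executing any single computation. A naive direct proof of closure under submodules — ``the $I$-component of $w\in W$ again lies in $W$'' — would want an element of $A$ realizing that projection, which forces the Chinese-remainder condition $I+\bigcap_{J\in S\setminus\{I\}}J=A$ for finite $S\subseteq X$; for \emph{left} ideals this does not follow from pairwise comaximality in general, so that route is a dead end. The argument above sidesteps it entirely: prove the cheap statement about quotients first, and then the independence of weight spaces in the already-known weight module $V/W$ does all the work. I expect that recognizing this is the crux; once it is set up this way, every step is routine.
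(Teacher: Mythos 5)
Your proof is correct. The paper in fact states this lemma without any proof (it is treated as routine), so there is no argument of the authors to compare against; your write-up fills that gap with what is surely the intended reasoning. Your order of operations is sound: quotients are handled by pushing forward weight vectors and invoking the directness statement the paper records just before the lemma (which you are entitled to use, since the paper asserts it as part of the setup), and submodules then follow by decomposing $w\in W$ inside $V$ and using independence of the weight spaces of $V/W$ to see each component lies in $W$ --- this correctly avoids the left-ideal Chinese-remainder condition $I+\bigcap_{J\neq I}J=A$, which, as you note, does not follow from pairwise comaximality of left ideals. The two reductions (simplicity in $\mathsf{Wt}(A,X)$ versus in ${}_A\SMod$, and the last assertion via the surjection $A/I\to V$, $a+I\mapsto av$ for $0\neq v\in V^I$) are the standard arguments and are stated accurately.
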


Let $\mathcal{B}$ be a full subcategory of ${}_A\SMod$ with object set $\mathcal{B}=\{A/I\}_{I \in X}$. We have the adjunction $|\cdot| \dashv Y$, with unit $\eta$ and counit $\epsilon$. We show that $\mathsf{Wt}(A,X)$ arises from this adjunction.

\begin{Proposition} \label{prop: weightfix}
    If $M$ is a weight module for each $M \in \mathcal{B}$, then
    \begin{enumerate}[{\rm (i)}]
        \item $\Fix_\epsilon = \mathsf{Wt}(A,X).$
        \item $\Fix_\eta = \Bmod.$
    \end{enumerate}
    In particular, the category $\mathsf{Wt}(A,X)$ of weight modules over $A$ is equivalent to $\Bmod$.
\end{Proposition}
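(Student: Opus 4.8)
The plan is to use the two hypotheses — pairwise comaximality of the ideals in $X$, and that each $A/I$ is a weight module — to reduce to the single-object situation of the previous subsection. First I would record two structural facts. By comaximality, $\Hom_A(A/I,A/J)=0$ for $I\neq J$, so $\mathcal{B}$ is the coproduct, as a $k$-linear category, of its one-object full subcategories $\mathcal{B}_I=\{A/I\}$. Hence $\Bmod\cong\prod_{I\in X}{}_{S_I}\SMod$ with $S_I=\End_A(A/I)^{\rm op}=\mathcal{A}(I)$, and the adjunction $|\cdot|\dashv Y$ is the product over $I\in X$ of the single-object adjunctions: $Y(V)$ is the functor $I\mapsto\Hom_A(A/I,V)\cong V^I$ (writing $V^I=\{v\in V: Iv=0\}$), and $|F|=\bigoplus_{I\in X}(A/I)\otimes_{S_I}F(A/I)$. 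Second, the module $A/I$ is a weight module precisely when $\bigoplus_{K\in X}\Hom_A(A/K,A/I)\to A/I$ is an isomorphism; the only nonzero summand is $K=I$, with image $N_A(I)/I$, so the hypothesis forces $N_A(I)=A$, i.e. $I$ is two-sided and $S_I=A/I$. Thus the $I$-th single-object left adjoint is the (fully faithful) inflation functor $(A/I)\otimes_{A/I}(-)\colon{}_{A/I}\SMod\to{}_A\SMod$ along $A\twoheadrightarrow A/I$, and $Y_I$ is the invariants functor $V\mapsto V^I$.

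For (i), I would write the counit at $V$ as
\[\epsilon_V\ \colon\ \bigoplus_{I\in X}(A/I)\otimes_{S_I}V^I\ \longrightarrow\ V,\]
whose $I$-th component is $(a+I)\otimes v\mapsto av$. Since $S_I=A/I$ and $Iv=0$ for $v\in V^I$, the canonical map $(A/I)\otimes_{A/I}V^I\to V^I$ is an isomorphism under which this component becomes the inclusion $V^I\hookrightarrow V$. Hence $\epsilon_V$ is exactly the canonical map $\bigoplus_{I\in X}V^I\to V$, which by definition is an isomorphism if and only if $V\in\mathsf{Wt}(A,X)$; so $\Fix_\epsilon=\mathsf{Wt}(A,X)$.

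For (ii), full faithfulness of inflation gives $|F|\cong\bigoplus_{J\in X}F(A/J)$, with $A$ acting on the summand $F(A/J)$ through $A\twoheadrightarrow A/J$. Then for each $K\in X$,
\[Y(|F|)(A/K)=|F|^K=\bigoplus_{J\in X}F(A/J)^K,\]
and the $J$-th term vanishes for $J\neq K$: by comaximality the image of $K$ in $A/J$ is all of $A/J$, so a vector killed by $K$ is killed by $1$, hence zero. For $J=K$ the action of $K$ on $F(A/K)$ factors through $A\to A/K$ and so is trivial, giving $F(A/K)^K=F(A/K)$. Unwinding the definition of the unit, $\eta_{F,A/K}\colon F(A/K)\to|F|^K=F(A/K)$ is the identity, so $\eta_F$ is an isomorphism for every $F$, and $\Fix_\eta=\Bmod$. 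The last sentence of the proposition now follows from Proposition~\ref{prp:fix}.

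The anticipated obstacle is bookkeeping rather than conceptual difficulty: one has to keep the $A$-, $A/I$- and $S_I$-module structures straight through the coend and relative-tensor identifications, and the comaximality hypothesis must be used a second time in part (ii) to kill the off-diagonal invariants. The one genuinely substantive observation is the equivalence ``each $A/I$ is a weight module $\iff$ each $I\in X$ is two-sided'', which is what lets the single-object results apply directly.
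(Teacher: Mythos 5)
There is a genuine gap, and it is fatal: your opening structural claim that comaximality gives $\Hom_A(A/I,A/J)=0$ for $I\neq J$ is false. A morphism $A/K\to A/I$ corresponds to a coset $a+I$ with $Ka\subseteq I$, i.e.\ to the $K$-weight space $(A/I)^K$; the usual trick of writing $1=i+x$ with $i\in I$, $x\in K$ does not kill such an $a$, because $I$ is only a \emph{left} ideal, so $ia\notin I$ in general. Concretely, take $A=U(\Fsl_2)$, $X=\{A\Fm_\la\}_{\la}$ with $\Fm_\la\subset U(\Fh)$ the maximal ideal at $\la$ (these left ideals are pairwise comaximal): then $(h-\la-2)e=e(h-\la)\in A\Fm_\la$, so $e+A\Fm_\la$ is a nonzero element of $\Hom_A(A/A\Fm_{\la+2},\,A/A\Fm_\la)$. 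Consequently your second claim — that ``$A/I$ is a weight module'' forces $N_A(I)=A$, i.e.\ $I$ two-sided — is also false: in this example $A/A\Fm_\la$ \emph{is} a weight module, with many nonzero weight spaces besides the $I$-invariants, and $A\Fm_\la$ is not two-sided. These cross-homs are not a nuisance to be argued away; they are the whole point of the construction (they are the morphism spaces $\mathcal{A}(I,K)=(A/I)^K$ of the normalizer category, e.g.\ raising and lowering operators). If your dichotomy were correct, the proposition would be essentially vacuous: weight modules would just be direct sums of modules over quotient algebras $A/I$, which contradicts the intended applications (Lepowsky--McCollum, generalized Weyl algebras, Lie algebra weight modules), where $\End_A(A/I)^{\rm op}$-modules classify only the simples with nonzero $I$-weight space.

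Because of this, the reduction of $\mathcal{B}$ to a coproduct of one-object categories, the identification $\Bmod\cong\prod_I{}_{S_I}\SMod$, and the computation of $|F|$ as $\bigoplus_I (A/I)\otimes_{S_I}F(A/I)$ all collapse; the coend genuinely mixes the objects through the nonzero cross-morphisms. The paper instead works with the coend directly: using the hypothesis that each object $N\in\mathcal{B}$ decomposes as $N\cong\bigoplus_M\Hom_A(M,N)$, it builds an explicit map $n_F:|F|\to\bigoplus_M F(M)$ out of the evaluation maps $\Hom_A(M,N)\otimes F(N)\to F(M)$, checks it kills the coend relations, and shows it is inverse to (the totalization of) $\eta_F$; this simultaneously proves that $|F|$ is a weight module with weight spaces $F(M)$, so $\Fix_\eta=\Bmod$, and then specializing to $F=Y(V)$ identifies $\epsilon_V$ with the canonical map $\bigoplus_M\Hom_A(M,V)\to V$, giving $\Fix_\epsilon=\mathsf{Wt}(A,X)$. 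Your final identification of $\epsilon_V$ with $\bigoplus_I V^I\to V$ is the right statement, but it needs this coend argument (or something equivalent) rather than the false splitting of $\mathcal{B}$.
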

\begin{proof}
    Let $F \in \Bmod$. For each $P \in \mathcal{B}$, the evaluation map $\Hom_A(P,N) \otimes F(N) \rightarrow F(P)$ given by $f \otimes v \mapsto F(f)(v)$ gives us a map in the following commutative diagram:
    \[
    \begin{tikzcd}
        \bigoplus_{M,N} \Hom_A(M,N) \otimes F(N) \arrow[r] & \bigoplus_M F(M)\\
        \bigoplus_N \Hom_A(P,N) \otimes F(N) \arrow[r] \arrow[u] & F(P) \arrow[u]
    \end{tikzcd}
    \]
    Let $n \in N_1, g \in \Hom_A(N_1, N_2), v \in F(N_2)$. Say $n \mapsto \sum_M f_M$ under the natural isomorphism $N_1 \rightarrow \bigoplus_M \Hom_A(M, N_1)$. Then naturality implies that $g(n) \mapsto \sum_M g \circ f_M$. So we have a map:
    $$\omega_F: \bigoplus_N N \otimes F(N) \rightarrow \bigoplus_{M,N} \Hom_A(M,N) \otimes F(N) \rightarrow \bigoplus_M F(M)$$
    such that
    \begin{align*}
    g(n) \otimes v - n \otimes F(g)(v) &\mapsto \sum_M g \circ f_M \otimes v - \sum_M f_M \otimes F(g)(v)\\
    &\mapsto \sum_M F(g \circ f_M)(v) - F(f_M)F(g)(v)\\
    &= 0
    \end{align*}
    So there is an induced map $n_F:|F| \rightarrow \bigoplus_M F(M)$ which makes the following diagram commute:
    \[
    \begin{tikzcd}
        \bigoplus_N N \otimes F(N) \arrow[r, twoheadrightarrow] \arrow[d, "\omega_F"] & {|F|} \arrow[dl, dashed, "n_F"]\\
        \bigoplus_M F(M)
    \end{tikzcd}
    \]
    This map provides an inverse to the linear map $\bigoplus_M F(M) \rightarrow |F|$ (which we call, by abuse of notation, $\eta_F$) defined by the commutative diagram:
    \[\begin{tikzcd}
        \bigoplus_M F(M) \arrow[r, "\eta_F"] & {|F|}\\
        F(M) \arrow[r, "(\eta_F)_M"] \arrow[u] & \Hom_A(M, {|F|}) \arrow[u]
    \end{tikzcd}\]
    In particular, this implies that $\bigoplus_M \Hom_A(M, |F|) \rightarrow |F|$ is an isomorphism. That is, $|F|$ is a weight module. Furthermore, each $(\eta_F)_M$ is an isomorphism, so that the weight space $\Hom_A(M, |F|)$ can be identified with $F(M)$. Thus $\Fix_\eta = \Bmod$.
    
    Furthermore, if we take $F = Y(V)$ for any $V \in {}_A\SMod$, we have the commutative diagram:
    \[\begin{tikzcd}
        \bigoplus_M \Hom_A(M, V) \arrow[r] \arrow[d, "\cong"] & V\\
        {|Y(V)|} \arrow[ur, "\epsilon_V", swap]
    \end{tikzcd}\]
    so that $\epsilon_V$ is an isomorphism precisely when $V$ is a weight module.
\end{proof}

As a corollary, we recover the following:
\begin{Corollary}[{\cite[Thm.~4.9]{LepMcC1973}}]
    If $M$ is a weight module for each $M \in \mathcal{B}$, then for each $I \in X$ there is a one to one correspondence: \begin{align*}\{V \in \mathrm{Irr}({}_A\SMod):\Hom_A(A/I,V) \neq 0\} &\leftrightarrow \mathrm{Irr}({}_{\End_A(A/I)}\SMod)\\
    V &\mapsto \Hom_A(A/I,V)\end{align*}
\end{Corollary}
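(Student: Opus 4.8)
The plan is to obtain the bijection as a composite of three correspondences already available above. The first step is to rewrite the left-hand set entirely inside the weight category: I claim that
\[
\{V \in \mathrm{Irr}({}_A\SMod) : \Hom_A(A/I, V) \neq 0\} = \{V \in \mathrm{Irr}(\mathsf{Wt}(A,X)) : Y(V)(A/I) \neq 0\}.
\]
For the inclusion ``$\subseteq$'', if $V$ is a simple $A$-module with $\Hom_A(A/I,V)\cong V^I\neq 0$, then, since $A/I\in\mathcal{B}$ is a weight module by hypothesis, the last assertion of Lemma \ref{lem: weightcategory} forces $V\in\mathsf{Wt}(A,X)$; and since $\mathsf{Wt}(A,X)$ is a full subcategory closed under submodules, $V$ remains simple there (Lemma \ref{lem: weightcategory}), so $V\in\mathrm{Irr}(\mathsf{Wt}(A,X))$. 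For ``$\supseteq$'', Lemma \ref{lem: weightcategory} says a simple object of $\mathsf{Wt}(A,X)$ is simple as an $A$-module, while $Y(V)(A/I)=\Hom_A(A/I,V)$ by the definition of the restricted Yoneda embedding $Y$.

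The second step is to transport along the equivalence of Proposition \ref{prop: weightfix}. Since $M$ is a weight module for every $M\in\mathcal{B}$, that proposition gives $\Fix_\epsilon=\mathsf{Wt}(A,X)$ and $\Fix_\eta=\Bmod$; hence, by Proposition \ref{prp:fix}, the functor $Y$ restricts to an equivalence $\mathsf{Wt}(A,X)\xrightarrow{\ \sim\ }\Bmod$ with quasi-inverse $|\cdot|$. An equivalence of categories preserves and reflects simplicity and is essentially surjective, and $Y(V)(A/I)\neq 0$ is by definition the statement that $F(A/I)\neq 0$ for $F=Y(V)$; therefore $V\mapsto Y(V)$ restricts to a bijection
\[
\{V \in \mathrm{Irr}(\mathsf{Wt}(A,X)) : Y(V)(A/I) \neq 0\}\ \longleftrightarrow\ \{F \in \mathrm{Irr}(\Bmod) : F(A/I) \neq 0\}.
\]

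The third step is to apply Proposition \ref{prop: endsimple} with $\mathcal{C}=\mathcal{B}^{\mathrm{op}}$ (so that ${}_\mathcal{C}\SMod=\Bmod$) and $P=A/I$: it produces a bijection $\{F\in\mathrm{Irr}(\Bmod):F(A/I)\neq0\}\leftrightarrow\mathrm{Irr}({}_{\End_{\mathcal{B}^{\mathrm{op}}}(A/I)}\SMod)$ via $F\mapsto F(A/I)$, where $\End_{\mathcal{B}^{\mathrm{op}}}(A/I)\cong\End_A(A/I)^{\mathrm{op}}$ is the algebra $S$ of the single-object discussion — this is the algebra written $\End_A(A/I)$ in the statement, under whichever opposite-ring convention is used for the endomorphism action on $\Hom_A(A/I,V)$. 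Composing the three bijections above and noting that the total map sends $V$ to $Y(V)(A/I)=\Hom_A(A/I,V)$ yields exactly the asserted correspondence.

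I do not expect a genuine obstacle: every ingredient is already in place. The step requiring the most care is the first, which uses \emph{both} halves of Lemma \ref{lem: weightcategory} — that $\mathsf{Wt}(A,X)$ is a Serre subcategory of ${}_A\SMod$, so that irreducibility is unambiguous inside and outside it, and that a simple $A$-module with nonzero $I$-invariants automatically lies in $\mathsf{Wt}(A,X)$ — so that restricting attention to the weight category does not alter the set cut out by the condition $\Hom_A(A/I,V)\neq0$. The only other bookkeeping point is the harmless matching of $\End_{\mathcal{B}^{\mathrm{op}}}(A/I)$ with $\End_A(A/I)$ up to taking opposites.
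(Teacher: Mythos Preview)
Your argument is correct and follows exactly the route the paper takes: the paper's proof is the single line ``Follows from Lemma~\ref{lem: weightcategory}, Proposition~\ref{prop: weightfix}, and Proposition~\ref{prop: endsimple},'' and you have simply spelled out how those three ingredients combine. Your flagging of the $\End_A(A/I)$ versus $\End_A(A/I)^{\mathrm{op}}$ bookkeeping is appropriate and matches the paper's own convention (cf.\ the identification $S=\End_A(B)^{\mathrm{op}}$ in the single-object discussion).
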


\begin{proof}
    Follows from Lemma \ref{lem: weightcategory}, Proposition \ref{prop: weightfix}, and Proposition \ref{prop: endsimple}.
\end{proof}

\begin{Example}
If $A=A(R,\si,t)=\oplus_{g\in\Z^n} A_g$ is a (twisted) generalized Weyl algebra of degree $n$ with base ring $R$, then we can take $X$ to be the set of all left ideals of $A$ of the form $A\Fm$ where $\Fm$ ranges over the maximal spectrum of $R$. Then we recover the result of \cite{MazPonTur2003} stating that, for fixed $\Fm$, the irreducible $A$-modules $V$ that are weight modules with respect to $R$ and whose weight space $V_\Fm$ is nonzero, are in bijection with the simple $C(\Fm)/\langle\Fm\rangle$-modules, where $C(\Fm)=\oplus_{g\in G_\Fm} A_g$ and $G_\Fm=\{g\in\Z^n\mid \si_1^{g_1}\cdots\si_n^{g_n}(\Fm)=\Fm\}$ is the stabilizer subgroup of $\Z^n$ at $\Fm$, and $\langle\Fm\rangle$ is the two-sided ideal in $C(\Fm)$ generated by $\Fm$.
More generally, we may take any $\Z^n$-stable subset $S$ of the prime spectrum of $R$ with the property that $\mathfrak{p}+\mathfrak{q}=R$ for any distinct $\mathfrak{p},\mathfrak{q}\in S$ and let $X=\{A\mathfrak{p}\mid \mathfrak{p}\in S\}$.
\end{Example}

\begin{Example}
Let $\Fg$ be a simple Lie algebra with Cartan subalgebra $\Fh$. For each $\la\in\Fh^\ast$, let $\Fm_\la$ be the kernel of the evaluation homomorphism $U(\Fh)\to\C$ given by $h\mapsto\la(h)$. Let $X=\{A\Fm_\la\mid\la\in\Fh^\ast\}$. Then we recover the basic fact about weight modules over Lie algebras, that they are equivalent to modules over the centralizer $U(\Fg)^\Fh$. More precisely, for fixed $\mu\in\Fh^\ast$, (isoclasses of) simple $\Fh$-weight $\Fg$-modules $V=\oplus_{\la\in\Fh^\ast}V[\la]$ with $V[\mu]\neq 0$ are in bijection with (isoclasses of) simple modules over the centralizer $U(\Fg)^\Fh$. This restricts to weight modules with finite-dimensional weight spaces and finite-dimensional modules over the centralizer.
\end{Example}

\subsection{Generalized Weight Modules}

Let $A$ be an algebra, and let $\mathbb{X}$ be a set of downward directed posets of left ideals of $A$. The elements of $\mathbb{X}$ are called \emph{blocks}, and each block $\I \in \mathbb{X}$ is considered to be a poset category whose morphisms are the inclusion maps. We assume $I + J = A$ for any $I,J$ from distinct blocks of $\mathbb{X}$.

Given $V \in {}_A\SMod$ and $\I \in \mathbb{X}$, the {\em generalized weight space} of $V$ corresponding to $\I$ is $V(\I) = \{v \in V: Iv = 0 \text{ for some } I \in \I\}$. Since, $\I$ is downward directed, $V(\I)$ is the directed colimit $\varinjlim V^I= \colim_{I \in \I} V^I \cong \colim_{I \in \I}\Hom_A(A/I,V)$.

Now $\bigoplus_{\I \in \mathbb{X}} \colim_{I \in \I} \Hom_A(A/I,V) \rightarrow V$ is an injective linear map for each $V \in {}_A\SMod$ (that is, the sum of generalized weight spaces of $V$ is direct). We call $V \in {}_A\SMod$ a \emph{generalized weight module}, if $\bigoplus_{\I \in \mathbb{X}} \colim_{I \in \I} \Hom_A(A/I,V) \rightarrow V$ is an isomorphism. We denote by $\HC$ the full subcategory of ${}_A\SMod$ consisting of generalized weight modules.

\begin{Example}
    Let $A$ be an algebra with subalgebra $\Gamma$. The {\em cofinite spectrum} $\mathrm{cfs}(\Gamma)$ of $\Gamma$ is the set of maximal two-sided ideals of $\Gamma$ of finite codimension. In \cite{DFO1994}, the algebra $\Gamma$ was assumed to be quasicommutative. By definition, this means $\Ext_\Ga^1(S_\Fm,S_\Fn)=0$ for all $\Fm\neq\Fn\in\mathrm{cfs}(\Ga)$, where $S_\Fm$ is the unique simple $\Ga$-module with annihilator $\Fm$.
    For each $\frak{m} \in \mathrm{cfs}(\Gamma)$, let $\I(\frak{m}) = \{A\frak{m}^k: k\geq 0\}$. Let $\mathbb{X} = \{\I(\frak{m}):\frak{m} \in \mathrm{cfs}(\Gamma)\}$. Then $\HC$ is the category of Harish-Chandra modules from \cite{DFO1994}.
\end{Example}

\begin{Example}
    More generally, we may obtain the category of block modules, introduced in \cite{Fil2024}. Again, let $A$ be an algebra with subalgebra $\Gamma$. Let $\sim$ be an equivalence relation on $\mathrm{cfs}(\Gamma)$ (which is typically given by the failure of $\Gamma$ to be quasicommutative). For each $\frak{m} \in \mathrm{cfs}(\Gamma)$, let $\I([\frak{m}]_\sim) = \{A\frak{m}_1\cdots\frak{m}_k: \frak{m}_1,\ldots,\frak{m}_k \in [\frak{m}]_\sim\}$. Let $\mathbb{X} = \{\I([\frak{m}]_\sim):\frak{m} \in \mathrm{cfs}(\Gamma)\}$. Then $\HC$ is the category of Harish-Chandra block modules with respect to $\sim$.
\end{Example}

\begin{Lemma}\label{lem: genweightcategory}
    $\HC$ is closed under taking direct sums, submodules, quotients. Thus $V \in \HC$ is simple if and only if it is simple as an $A$-module. Furthermore, given $I \in \mathbb{X}$, if $A/I$ is a generalized weight module, then any simple $A$-module $V$ with $V^I \neq 0$ is a generalized weight module.
\end{Lemma}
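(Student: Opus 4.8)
The statement to prove is Lemma \ref{lem: genweightcategory}, which asserts that $\HC$ is closed under direct sums, submodules and quotients; that simplicity in $\HC$ coincides with simplicity as an $A$-module; and that if $A/I$ is a generalized weight module for some $\I\in\mathbb{X}$, then any simple $V$ with $V^I\neq 0$ is a generalized weight module. The plan is to parallel the proof of the analogous Lemma \ref{lem: weightcategory} for ordinary weight modules, but with the extra bookkeeping needed because generalized weight spaces are directed colimits $V(\I)=\colim_{I\in\I}V^I$ rather than simple fixed-point spaces.

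\textbf{Direct sums.} First I would observe that the functor $V\mapsto V(\I)$ commutes with arbitrary direct sums: this is because $V^I=\Hom_A(A/I,V)$ commutes with direct sums when $A/I$ is finitely generated (which it is, being cyclic), and filtered colimits commute with direct sums. Hence for a family $\{V_\alpha\}$ the natural map $\bigoplus_{\I}\bigoplus_\alpha V_\alpha(\I)\to\bigoplus_\alpha V_\alpha$ is, after reindexing, the direct sum over $\alpha$ of the maps $\bigoplus_\I V_\alpha(\I)\to V_\alpha$, so it is an isomorphism if each summand is; closure under direct sums follows.

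\textbf{Submodules and quotients.} This is the technical heart. Let $0\to V'\to V\to V''\to 0$ be exact with $V\in\HC$. The key point is that $(-)(\I)$ is left exact (being a filtered colimit of the left exact functors $(-)^I$) so $V'(\I)\hookrightarrow V(\I)$; and one must show the sum of generalized weight spaces is exhausted and remains direct in $V'$ and $V''$. For $V'$: directness is automatic since $V'(\I)\subseteq V(\I)$ and the $V(\I)$ are in direct sum. For exhaustion, given $v'\in V'$, view it in $V$, write $v'=\sum v_\I$ with $v_\I\in V(\I)$; I would argue that each component $v_\I$ actually lies in $V'$ by projecting: the decomposition $V=\bigoplus_\I V(\I)$ gives idempotent projections, and since $V'$ is a submodule one checks $v_\I\in V'$ using that the ideals from distinct blocks are comaximal (so the projection onto the $\I$-component is realized by the action of an element of $A$ — a Chinese-Remainder argument on the relevant finite collection of blocks appearing in the expansion of $v'$). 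For $V''$: surjectivity of $V(\I)\to V''(\I)$ is the subtle part — given $\bar v\in V''$ lying in some $V''^I$, i.e. $I\bar v=0$, lift to $v\in V$; then $Iv\subseteq V'$, and since $V'\in\HC$ one can, after passing to a smaller ideal $J\subseteq I$ within the block (using downward directedness), arrange to correct $v$ by an element of $V'$ so that the lift is annihilated by $J$. This gives surjectivity onto generalized weight spaces, and directness in $V''$ is inherited. I expect this comaximality/lifting step to be the main obstacle; everything else is formal.

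\textbf{Simplicity and the last assertion.} Once closure under submodules and quotients is established, $V\in\HC$ is simple in $\HC$ iff it has no proper nonzero $A$-submodule, i.e. iff it is simple as an $A$-module. For the final claim, let $V$ be a simple $A$-module with $V^I\neq 0$ for some $I\in\I\in\mathbb{X}$, and suppose $A/I$ is a generalized weight module. Pick $0\neq v\in V^I$; then $A/I\twoheadrightarrow Av=V$ (simplicity), so $V$ is a quotient of the generalized weight module $A/I$, hence a generalized weight module by the closure under quotients just proved. This mirrors exactly the end of Lemma \ref{lem: weightcategory}'s proof, now using Lemma \ref{lem: genweightcategory}'s own quotient-closure in place of Lemma \ref{lem: weightcategory}'s.
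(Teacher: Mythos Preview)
The paper states this lemma without proof, so there is no argument to compare against; I comment only on your plan.

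Direct sums, the simplicity statement, and the final assertion are handled correctly. The gap is in the submodule step. You propose to realize the projection of $v'=\sum_j v_j$ onto $v_j$ by the action of an element of $A$, via a Chinese Remainder argument on the finitely many pairwise comaximal left ideals $I_1,\dots,I_n$ with $I_jv_j=0$. For two comaximal left ideals this is fine, but for $n\ge 3$ pairwise comaximality of \emph{left} ideals in a noncommutative ring does not imply $I_j+\bigcap_{k\neq j}I_k=A$: already in $A=M_2(k)$ any three distinct minimal left ideals are pairwise comaximal, yet any two of them intersect in $0$, so no element of $A$ acts as the required projector. Thus the argument as written does not go through in the generality of this section. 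Your quotient step then inherits the problem, since it invokes ``$V'\in\HC$'' (established via the flawed submodule step), and the ``correction'' of a lift $v$ from the hypothesis $Iv\subseteq V'$ alone is not made precise.

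The clean fix is to reverse the order. Prove closure under quotients first, using only the standing fact that $\sum_{\I}W(\I)$ is direct in \emph{every} $A$-module $W$: for $V\in\HC$ and $V''=V/V'$, the images of the $V(\I)$ span $V''$ and lie inside the respective $V''(\I)$; since the latter are independent, $V''=\bigoplus_\I V''(\I)$ and in fact $V''(\I)$ equals the image of $V(\I)$. Submodules are then immediate: for $v'\in V'$ write $v'=\sum_\I v_\I$ in $V$; its image in $V''$ is $0=\sum_\I \bar v_\I$ with $\bar v_\I\in V''(\I)$, so each $\bar v_\I=0$, i.e.\ $v_\I\in V'\cap V(\I)=V'(\I)$. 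No CRT is needed.
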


Let $\mathcal{B}$ be a full subcategory of ${}_A\SMod$ with object set $\mathcal{B}=\{A/I\}_{I \in \I \in \mathbb{X}}$. We have the adjunction $|\cdot| \dashv Y$, with unit $\eta$ and counit $\epsilon$. We show that $\HC$ arises from this adjunction.

\begin{Proposition}\label{prop: genfix}
    If $M$ is a generalized weight module for each $M \in \mathcal{B}$, the following statements hold:
    \begin{enumerate}[{\rm (i)}]
        \item $\mathrm{Fix}_\epsilon = \HC$
        \item $\mathrm{Ob}(\mathrm{Fix}_\eta) = \{F \in \Bmod: F \cong Y(V) \text{ for some } V \in {}_A\SMod\}$
    \end{enumerate}
\end{Proposition}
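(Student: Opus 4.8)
The two parts are proved by a direct analysis of the counit. By Theorem~\ref{thm:adj} we have $|Y(V)|=\int^{A/I}(A/I)\otimes_k\Hom_A(A/I,V)$, and $\epsilon_V\colon|Y(V)|\to V$ is the evaluation map $\overline{x\otimes\psi}\mapsto\psi(x)$; it is $A$-linear for the $A$-action on the left tensor factor. The plan for (i) is to show that $\epsilon_V$ is \emph{always} a monomorphism and that its image is exactly $V_{\mathrm{wt}}:=\sum_{I\in\I\in\mathbb{X}}V^I=\sum_{\I\in\mathbb{X}}V(\I)$. Granting this, $\epsilon_V$ is an isomorphism if and only if $V=V_{\mathrm{wt}}$, and since $\bigoplus_{\I}V(\I)\to V$ is always injective, this is precisely the condition $V\in\HC$, so $\Fix_\epsilon=\HC$. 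Part (ii) will then follow formally from (i) together with Proposition~\ref{prp:fix}.

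For the image: since $A/I$ is cyclic, the image of $\psi\in\Hom_A(A/I,V)\cong V^I$ is $A\cdot\psi(1+I)$, so $\operatorname{im}\epsilon_V=\sum_I A\cdot V^I=A\cdot V_{\mathrm{wt}}$. The first real point is that $V_{\mathrm{wt}}$ is already an $A$-submodule: given $v\in V^I$ and $a\in A$, use the hypothesis that $A/I\in\mathcal{B}$ is a generalized weight module to write $a+I=\sum_{\I'}(a+I)_{\I'}$ with each summand annihilated by some ideal $K\in\I'$; applying the $A$-module map $A/I\to V$, $\bar c\mapsto cv$, carries each summand into $V^K$, whence $av\in V_{\mathrm{wt}}$. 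Thus $\operatorname{im}\epsilon_V=V_{\mathrm{wt}}$, and $\epsilon_V$ is epic exactly when $V\in\HC$.

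The main obstacle is the injectivity of $\epsilon_V$, and the plan here is to extract a ``generalized weight decomposition'' of an arbitrary element of $|Y(V)|$. Write such an element as $\xi=\sum_k\overline{x_k\otimes\psi_k}$ with $x_k\in A/J_k$. Decompose each $x_k$ into its generalized weight components (again using that $A/J_k$ is a generalized weight module), each component annihilated by some ideal in some block. Using the coend relations coming from the $A$-module maps $A/K\to A/J_k$ sending $1+K$ to the relevant generalized weight component of $x_k$, together with the surjections $A/K'\twoheadrightarrow A/K$ for $K'\subseteq K$ within a single block, one rewrites $\xi=\sum_{\I}\overline{(1+K_{\I})\otimes\psi_{u_{\I}}}$, a finite sum over blocks, where $K_{\I}\in\I$ is a common lower bound of the ideals of $\I$ occurring and $u_{\I}\in V^{K_{\I}}\subseteq V(\I)$ (and $\psi_{u_{\I}}$ is the corresponding morphism $A/K_{\I}\to V$). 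Since $\epsilon_V\big(\overline{(1+K_{\I})\otimes\psi_{u_{\I}}}\big)=u_{\I}$, the hypothesis $\epsilon_V(\xi)=0$ gives $\sum_{\I}u_{\I}=0$ with the $u_{\I}$ in distinct generalized weight spaces, so each $u_{\I}=0$ by directness of $\bigoplus_{\I}V(\I)$, and hence $\xi=0$. I expect the delicate part to be the bookkeeping with the coend relations: verifying that the auxiliary $A$-module maps are well defined and that replacing the various ideals of a block by a single common lower bound is compatible with the dinaturality relations.

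For (ii): the inclusion ``$\subseteq$'' is immediate, since if $\eta_F$ is an isomorphism then $F\cong Y(|F|)$. For ``$\supseteq$'', suppose $F\cong Y(V)$. The submodule $V_{\mathrm{wt}}\subseteq V$ lies in $\HC$, and the inclusion induces an isomorphism $Y(V_{\mathrm{wt}})\xrightarrow{\ \sim\ }Y(V)$ because $(V_{\mathrm{wt}})^I=V^I$ for every $I$ in a block. By (i), $V_{\mathrm{wt}}\in\Fix_\epsilon$; and the triangle identity $Y(\epsilon_W)\circ\eta_{Y(W)}=\operatorname{id}_{Y(W)}$ shows that $\eta_{Y(W)}$ is an isomorphism whenever $\epsilon_W$ is, so $Y(V_{\mathrm{wt}})\in\Fix_\eta$. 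Since $\Fix_\eta$ is closed under isomorphism, $F\cong Y(V_{\mathrm{wt}})$ lies in $\Fix_\eta$.
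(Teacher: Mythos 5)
Your proof is correct, but it takes a partly different route from the paper's. For (i), the paper works with an arbitrary $F\in\Bmod$: it uses the generalized weight decomposition of each object $N\in\mathcal{B}$ to build a map $\omega_F$ that kills the coend relations, hence an induced inverse $n_F$ to the canonical map $\bigoplus_{\I}\colim_{I\in\I}F(A/I)\to|F|$; this shows that \emph{every} realization $|F|$ is a generalized weight module with weight spaces $\colim_{I\in\I}F(A/I)$, and (i) follows by specializing to $F=Y(V)$. Your argument is the element-wise shadow of this computation for $F=Y(V)$ only: the same decomposition-plus-dinaturality manipulation, phrased as putting a coend class into the normal form $\sum_{\I}\overline{(1+K_{\I})\otimes\psi_{u_{\I}}}$, which yields injectivity of $\epsilon_V$ and identifies its image with $\sum_{\I}V(\I)$ (your check that $V_{\mathrm{wt}}$ is an $A$-submodule is exactly where the hypothesis on $\mathcal{B}$ enters, as in the paper). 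The bookkeeping you flag is harmless: the auxiliary maps $A/K\to A/J_k$ are well defined precisely because the components are annihilated by $K$, and each rewriting step is literally an instance of a generating relation of the coend, so no independent well-definedness of the normal form is needed. For (ii) the routes genuinely diverge: the paper deduces from its general-$F$ computation that $\epsilon|\cdot|$ is a natural isomorphism and then invokes the idempotency-type fact \cite{MacDSt1982} that $\eta Y$ is then also invertible, whereas you pass to the submodule $V_{\mathrm{wt}}\in\HC$, note $Y(V_{\mathrm{wt}})\cong Y(V)$ since $(V_{\mathrm{wt}})^I=V^I$ for all relevant $I$, and conclude via the triangle identity $Y(\epsilon_W)\circ\eta_{Y(W)}=\mathrm{id}$. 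Your version is more elementary and self-contained (no external lemma); the paper's version buys the stronger statements that $|\cdot|$ always lands in $\HC$ and that $\epsilon|\cdot|$ is invertible, which it reuses in the following proposition showing $\Fix_\eta$ is an abelian subcategory.
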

\begin{proof}
    We begin analogously to the proof of Proposition \ref{prop: weightfix}. Let $F \in \Bmod$. The evaluation maps $\Hom_A(A/I,N) \otimes F(N) \rightarrow F(A/I)$ given by $f \otimes v \mapsto F(f)(v)$ gives us maps in the following commutative diagram:
    \[
    \begin{tikzcd}
        \bigoplus_N \bigoplus_\I \colim_{I \in \I} \Hom_A(A/I,N) \otimes F(N) \arrow[r] & \bigoplus_\I \colim_{I \in \I} F(A/I)\\
        \bigoplus_N \colim_{I \in \I} \Hom_A(A/I,N) \otimes F(N) \arrow[r] \arrow[u] & \colim_{I \in \I} F(A/I) \arrow[u]
    \end{tikzcd}
    \]
    Let $n \in N_1, g \in \Hom_A(N_1, N_2), v \in F(N_2)$. Say $n \mapsto \sum_\I \iota_(f_\I)$ under the natural isomorphism $N_1 \rightarrow \bigoplus_\I \colim_I \Hom_A(A/I, N_1)$. Then naturality implies that $g(n) \mapsto \sum_\I \iota(g \circ f_\I)$. So we have a map:
    $$\omega_F: \bigoplus_N N \otimes F(N) \rightarrow \bigoplus_{N}\bigoplus_\I \colim_I \Hom_A(A/I,N) \otimes F(N) \rightarrow \bigoplus_\I \colim_I F(A/I)$$
    such that
    \begin{align*}
    g(n) \otimes v - n \otimes F(g)(v) &\mapsto \sum_\I \iota(g \circ f_\I) \otimes v - \sum_\I \iota(f_\I) \otimes F(g)(v)\\
    &\mapsto \sum_\I F(g \circ f_\I)(v) - F(f_\I)F(g)(v)\\
    &= 0
    \end{align*}
    So there is an induced map $n_F:|F| \rightarrow \bigoplus_\I \colim_I F(A/I)$ which makes the following diagram commute:
    \[
    \begin{tikzcd}
        \bigoplus_N N \otimes F(N) \arrow[r, twoheadrightarrow] \arrow[d, "\omega_F"] & {|F|} \arrow[dl, dashed, "n_F"]\\
        \bigoplus_\I \colim_I F(A/I)
    \end{tikzcd}
    \]
    This map provides an inverse to the linear map $\bigoplus_\I \colim_I F(A/I) \rightarrow |F|$ (which we call, by abuse of notation, $\eta_F$) defined by the commutative diagram:
    \[\begin{tikzcd}
        \bigoplus_\I \colim_I F(A/I) \arrow[r, "\eta_F"] & {|F|}\\
        \colim_I F(A/I) \arrow[r, "\colim_I (\eta_F)_{A/I}"] \arrow[u] & \colim_I \Hom_A(A/I, {|F|}) \arrow[u]
    \end{tikzcd}\]
    
    In particular, this implies that $\bigoplus_\I \colim_I \Hom_A(A/I,|F|) \rightarrow |F|$ is an isomorphism. That is, $|F|$ is a generalized weight module. Furthermore, each $\colim_{I \in \I} (\eta_F)_{A/I}$ is an isomorphism, so that the generalized weight space $\colim_{I\in \I} \Hom_A(A/I, |F|)$ can be identified with $\colim_{I\in \I} F(A/I)$.

    Furthermore, if we take $F = Y(V)$ for any $V \in {}_A\SMod$, we have the commutative diagram:
    \[\begin{tikzcd}
        \bigoplus_\I \colim_{I} \Hom_A(A/I, V) \arrow[r] \arrow[d, "\cong"] & V\\
        {|Y(V)|} \arrow[ur, "\epsilon_V", swap]
    \end{tikzcd}\]
    so that $\epsilon_V$ is an isomorphism precisely when $V$ is a generalized weight module. Thus $\mathrm{Fix}_\epsilon = \HC$.

    Since $\epsilon |\cdot|$ is a natural isomorphism, $\eta Y$ is also an isomorphism \cite{MacDSt1982}. Thus $\mathrm{Ob}(\mathrm{Fix}_\eta) = \{F \in \Bmod: F \cong Y(V) \text{ for some } V \in {}_A\SMod\}$.
    \end{proof}

    \begin{Proposition}
        If $M$ is a generalized weight module for each $M \in \mathcal{B}$, then $\Fix_\eta$ is an abelian subcategory of $\Bmod$.
    \end{Proposition}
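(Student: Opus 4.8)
The plan is to deduce this from the equivalence $\Fix_\eta \cong \Fix_\epsilon$ of Proposition \ref{prp:fix} together with the identification $\Fix_\epsilon = \HC$ of Proposition \ref{prop: genfix}(i): once we know that $\HC$ is an abelian category, transporting that structure along the equivalence makes $\Fix_\eta$ abelian as well, and $\Fix_\eta$ is by definition a full subcategory of $\Bmod$. So the entire content is the (already essentially available) fact that $\HC$ is abelian, and that is precisely the role of Lemma \ref{lem: genweightcategory}.

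Concretely, first I would record that $\HC$ is a full subcategory of the abelian category ${}_A\SMod$ which, by Lemma \ref{lem: genweightcategory}, is closed under finite direct sums, under submodules, and under quotients. A full subcategory of an abelian category with these three closure properties is itself abelian: it contains a zero object and finite biproducts; the kernel in ${}_A\SMod$ of a morphism between objects of $\HC$ is a submodule, hence lies in $\HC$ and is the kernel there (kernels being limits and $\HC$ being full); the cokernel $W/\mathrm{im}(f)$ of $f\colon V\to W$ is a quotient of $W$, hence lies in $\HC$; and the remaining abelian axioms (every monomorphism is the kernel of its cokernel, every epimorphism the cokernel of its kernel) are inherited from ${}_A\SMod$. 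Thus $\HC = \Fix_\epsilon$ is abelian, and therefore so is $\Fix_\eta$.

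The one point that deserves care — and which I would flag rather than gloss over — is that the abelian structure on $\Fix_\eta$ is not literally the one inherited from $\Bmod$. Kernels are fine: since $Y$ is a right adjoint (Theorem \ref{thm:adj}) it is left exact, and $\HC$ is closed under submodules, so the kernel in $\Bmod$ of a morphism of $\Fix_\eta$ again lies in $\Fix_\eta$ and is the kernel there. Cokernels, however, need not be: $\Fix_\eta$ is only a \emph{reflective} subcategory of $\Bmod$, with reflector $Y\circ|\cdot|$ — this follows from the adjunction $|\cdot|\dashv Y$ together with the fact that $Y$ restricted to $\HC$ is fully faithful, being half of the equivalence $\HC\simeq\Fix_\eta$ — and the cokernel in $\Fix_\eta$ of $\theta\colon F\to G$ is obtained by applying this reflector to $\mathrm{coker}_{\Bmod}(\theta)$, which in general itself leaves $\Fix_\eta$. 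So the main (and only) obstacle is bookkeeping: being careful that the abelian structure is read off from $\HC$ through the equivalence, not naively from $\Bmod$. Beyond that, the argument is a direct assembly of Proposition \ref{prp:fix}, Proposition \ref{prop: genfix}, and Lemma \ref{lem: genweightcategory}, with no construction or estimate involved.
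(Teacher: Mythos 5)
Your core argument is the same as the paper's: Proposition \ref{prop: genfix}(i) identifies $\Fix_\epsilon$ with $\HC$, Lemma \ref{lem: genweightcategory} shows $\HC$ is an abelian, exactly embedded subcategory of ${}_A\SMod$, and the equivalence of Proposition \ref{prp:fix} transports abelianness to the full subcategory $\Fix_\eta$ of $\Bmod$; your kernel argument (left exactness of $Y$ plus closure under submodules) is in substance the paper's factorization $I_\eta\cong Y\circ I_\epsilon\circ|\cdot||_{\Fix_\eta}$ yielding left exactness of the inclusion. Where you diverge is on the relation between the abelian structure of $\Fix_\eta$ and that of $\Bmod$: the paper's proof ends by claiming that, since $\epsilon|\cdot|$ is invertible, $I_\eta$ is \emph{left} adjoint to $Y|\cdot|$ and hence right exact, whereas you claim the opposite handedness, namely that $\Fix_\eta$ is reflective ($Y|\cdot|\dashv I_\eta$) and that cokernels in $\Fix_\eta$ are reflections of, and in general differ from, cokernels in $\Bmod$.

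Your handedness is the correct one. Since $|G|\in\HC$ for every $G\in\Bmod$ (proof of Proposition \ref{prop: genfix}), the adjunction $|\cdot|\dashv Y$ restricts to $|\cdot|\dashv Y|_{\HC}$, and under the equivalence $\HC\simeq\Fix_\eta$ the functor $Y|_{\HC}$ becomes the inclusion $I_\eta$ while $|\cdot|$ becomes $Y|\cdot|$; so $I_\eta$ is the \emph{right} adjoint, hence left exact, and there is no reason for it to be right exact. In fact it need not even preserve epimorphisms: take $A=\Gamma=k[x]$ with blocks $\I(\frak m)=\{A\frak m^j\}$ as in the paper's first example of this subsection. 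The surjection $k[x]/(x^2)\to k[x]/(x)$ is a morphism of generalized weight modules, so its image under $Y$ is epic in $\Fix_\eta$; but evaluated at the object $A/(x)\in\mathcal B$ it is the zero map $k\to k$, so by Remark \ref{rem:pointwise} it is not epic in $\Bmod$. Thus the paper's final sentence (and its Grandis citation) has the adjunction reversed, and the proposition can only be read, as you read it, as asserting that $\Fix_\eta$ is a full subcategory of $\Bmod$ that is abelian, with kernels but not cokernels computed as in $\Bmod$. For that statement your proof is complete, and your explicit flagging of the non-inherited cokernels is a genuine improvement on the paper's own argument.
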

    \begin{proof}
        By Proposition \ref{prop: genfix}, $\mathrm{Fix}_\epsilon = \HC$. Thus by \ref{lem: genweightcategory}, $\mathrm{Fix}_\epsilon$ is an abelian subcategory of ${}_A\SMod$ and the inclusion functor $I_\epsilon: \mathrm{Fix}_\epsilon \rightarrow {}_A\SMod$ is exact. The inclusion $I_\eta: \mathrm{Fix}_\eta \rightarrow \Bmod$ is naturally isomorphic to $Y \circ I_\epsilon \circ |\cdot||_{\mathrm{Fix}_\eta}$. Since $Y$ is a right adjoint, $Y$ is left exact. Since $|\cdot| \dashv Y$ restricts to an equivalence between $\mathrm{Fix}_\epsilon$ and $\mathrm{Fix}_\eta$, $|\cdot||_{\mathrm{Fix}_\eta}$ is exact. Thus $I_\eta$ is left exact. Additionally, since $\epsilon |\cdot|$ is a natural isomorphism, $I_\eta$ is left adjoint to $Y|\cdot|: \Bmod \rightarrow \Fix_\eta$ \cite{Gran2021}, and thus right exact.
    \end{proof}
        
    Now $\HC$ is equivalent to a full abelian subcategory of $\Bmod$. In order to apply Proposition \ref{prop: endsimple} and quantify the irreducibles, we would need to show irreducibility in $\Fix_\eta$ is the same as irreducibility in $\Bmod$. Alternatively, sufficient conditions for the finiteness of $\mathrm{Irr}(\HC)$ were given in \cite{DFO1994} for the case of Harish-Chandra modules, and more generally in \cite{Fil2024} for the case of block modules. We note, however, that if an $A$-module $V$ is equipped with the discrete topology, then $$V(\I) \cong \mathrm{Cont}_A(\lim_{I\in \I}A/I, V).$$ This suggests that a topologically enriched approach to restricted Yoneda embeddings may generalize these results and provide the appropriate context for answering questions about generalized weight modules.

\end{document}